\newcommand{\R}{\mathbb{R}}
\newcommand{\N}{\mathcal{N}}
\newcommand{\X}{\mathcal{X}}
\newcommand{\Y}{\mathcal{Y}}
\newcommand{\norm}[1]{\left\| #1 \right\|}
\newcommand{\sigmafunc}[1]{\sigma \left( #1 \right)}
\newtheorem{assumption}{Assumption}
\newtheorem{example}{Example}
\title{A Unifying Framework for Sparsity Constrained  Optimization\thanks{Submitted to the editors DATE.}}
\author{Matteo Lapucci\thanks{Dipartimento di Ingegneria dell'Informazione, Universit\`a di Firenze, Via di Santa Marta 3, 50139 Firenze, Italy
		(\email{matteo.lapucci@unifi.it},
		\email{tommaso.levato@unifi.it}, \email{marco.sciandrone@unifi.it}).}
	\and Tommaso Levato\footnotemark[2]
	\and Francesco Rinaldi\thanks{Dipartimento di Matematica "Tullio Levi-Civita", Universit\`a di Padova, Via Trieste 63, 35121 Padova, Italy (\email{rinaldi@math.unipd.it}).}
	\and Marco Sciandrone\footnotemark[2]}
\begin{document}
\maketitle	

\begin{abstract}
	In this paper, we consider the optimization problem of minimizing a continuously differentiable function subject to both convex constraints and sparsity constraints. By exploiting a mixed-integer reformulation from the literature, we define a necessary optimality condition based on a tailored neighborhood that allows to take into account potential changes of the support set. We then propose an algorithmic framework to tackle the considered class of problems and prove its convergence to points satisfying the newly introduced concept of stationarity. We further show that, by suitably choosing the neighborhood, other well-known optimality conditions from the literature can be recovered at the limit points of the sequence produced by the algorithm.
	Finally, we analyze the computational impact of  the neighborhood size within our framework and  in the comparison with some state-of-the-art algorithms, namely, the Penalty Decomposition method and the Greedy Sparse-Simplex method. The algorithms have been tested using a benchmark related to sparse logistic regression problems.
\end{abstract}

\begin{keywords}
	sparsity constrained problems, optimality conditions, stationarity, numerical methods, asymptotic convergence, sparse logistic regression
\end{keywords}

\begin{AMS}
	90C30, 90C46, 65K05
\end{AMS}
	
\section{Introduction}
We consider the following sparsity constrained problem: 
\begin{equation}
\label{prob: cc_prob}
\begin{array}{cl}
\displaystyle \min_x & f(x) \\
\text{s.t.} & \| x \|_0 \leq s, \\
			& x \in X, 
\end{array}
\end{equation}
where $f:\R^n \to \R$ is a continuously differentiable function, $X \subseteq \R^n$ is a closed and convex set,
and $s<n$ is a properly chosen integer value. We further use $\X$ to indicate the overall feasible set $X\cap\{x\in\R^n\mid \|x\|_0\le s\}$.

Problem \eqref{prob: cc_prob} has a wide range of applications,  from subset selection in regression \cite{Miller1984} 
and the compressed sensing technique used in signal processing \cite{Candes2008} to portfolio optimization \cite{Bienstock1996,Mutunge2018}. 
Such a problem  can be reformulated into equivalent different mixed-integer problems and is known to be $\mathcal{NP}$-hard \cite{Bienstock1996,Natarajan1995,Shaw2008}.

The approaches proposed in the literature for the solution of problem \eqref{prob: cc_prob} include: exact methods (see, e.g., \cite{Bertsimas2009,Bienstock1996,Shaw2008,Vielma2008})
typically based on  branch-and-bound or branch-and-cut strategies; methods that handle suitable reformulations
of the problem based on orthogonality
constraints (see, e.g., \cite{Branda2018,bucher2018second,Burdakov2016,vcervinka2016constraint}); penalty decomposition methods, where 
penalty subproblems are solved by a block coordinate descent method \cite{lapucci2021convergent,Lu2013}; methods that identify points satisfying tailored optimality conditions related to the problem \cite{Beck2013,Beck2016}; heuristics like evolutionary algorithms \cite{Anagnostopoulos2010}, particle swarm methods \cite{Boudt2019,Deng2012}, 
genetic algorithms, tabu search and simulated annealing \cite{Chang2000}, and also neural networks \cite{Fernandez2007}.

We observe that problem \eqref{prob: cc_prob} is generally hard to solve because both the objective function and the feasible set
(due to the combinatorial nature of the sparsity constraint) are nonconvex. The inherently combinatorial flavor of the given problem  makes the definition of proper optimality conditions and, consequently, the development of algorithms that generate points satisfying those conditions a challenging task. 
A number of ways to address these issues are proposed in the literature (see, e.g., \cite{Beck2013,Beck2016,Burdakov2016,lapucci2021convergent,Lu2013}). However, some of the optimality conditions proposed do not fully take into account the combinatorial nature of the problem, whereas some of the corresponding algorithms \cite{Beck2013,Lu2013} require to exactly solve a sequence of nonconvex subproblems and this may be practically prohibitive. Moreover, due to the theoretical tools involved in the analysis, it is anyway not easy to relate  the different approaches with each other.

In this paper, we hence  give a unifying view on this matter. More specifically,  we consider the mixed-integer reformulation of problem \eqref{prob: cc_prob} described in \cite{Burdakov2016} and use 
it to define a suitable optimality condition.  
This condition is then embedded into an algorithmic framework aimed at finding points satisfying the resulting optimality criterion. The algorithm combines inexact minimizations with a strategy that explores tailored neighborhoods of a given feasible point. Those features make it easy to handle the nonconvexity in both the objective function  and the feasible set also from a practical  point of view.
We prove the convergence of the algorithmic scheme, establishing that its limit points satisfy the specific optimality condition. We then show that different  conditions proposed in the literature (see, e.g., \cite{Beck2013,Burdakov2016,Lu2013}) can be easily derived from ours. We finally perform some numerical tests on sparse logistic regression in order to show that the devised method is also computationally viable. 

\par\smallskip\noindent

The paper is organized as follows: in \cref{sec:opt_cond}, we provide basic definitions and preliminary results related to optimality conditions of problem \eqref{prob: cc_prob}.  In \cref{sec:framework}, we describe our proposed algorithmic framework and show (\cref{sec:convergence}) the convergence analysis without constraint qualifications. In \cref{sec:convergenceKKT}, we analyze the asymptotic convergence properties of the algorithm when constraint qualifications hold. Finally, we report  numerical experiments in \cref{sec:experiments} and give some concluding remarks in \cref{sec:conclusions}. We also provide in \cref{sec:appendix} some insights on the relationship between classical stationarity conditions for convex problems with and without constraints qualifications.

 
\section{Basic definitions and preliminary results}
\label{sec:opt_cond}
Even though problem \eqref{prob: cc_prob} is a continuous optimization problem, it has an intrinsic combinatorial nature and in applications the interest often lies in finding a good, possibly globally optimal configuration of active variables. Being \eqref{prob: cc_prob} a continuous problem, $x^*\in \X$ is a local minimizer if there exists an open ball $\mathcal{B}(x^*,\epsilon)$ such that $f(x^*)=\min\{f(x)\mid x\in \X\cap \mathcal{B}(x^*,\epsilon)\}$. In some works from the literature (e.g., \cite{Burdakov2016,Lu2013}) necessary conditions of local optimality have been proposed. However, for this particular problem every local minimizer for a fixed active set of $s$ variables is a local minimizer of the given problem. Hence the number of local minimizers grows as fast as $\binom{n}{s}$ and is thus of low practical usefulness.

In \cite{Beck2013,Beck2016}, the authors propose necessary conditions for global optimality that go beyond the concept of local minimum described above, thus allowing to consider possible changes to the structure of the support set, and reducing the pool of optimal candidates. However, these conditions are either tailored to the ``unconstrained case'', or limited to moderate changes in the support, or involve hard operations, such as exact minimizations or projections onto nonconvex sets. 

In order to introduce a general and affordable necessary optimality condition that also takes into account the combinatorial nature of the problem, we consider in our analysis 
the equivalent reformulation
of problem 
\eqref{prob: cc_prob} described in
\cite{Burdakov2016}:
\begin{equation}\label{prob:mi_prob}
	\begin{aligned}
	\min_{x,y}\;&f(x)\\
		 \text{s.t. }& e^\top y \geq n - s, \\
		 & x_i y_i = 0,  \quad \forall\, i=1,\ldots,n,\\
		 & x \in X,\\
		 & y \in \{ 0, 1 \}^n.
	\end{aligned}
\end{equation}
From here onwards, we will use the following notation:
\begin{equation*}
\arraycolsep=1.4pt
\begin{array}{rl}
\Y 		&= \{y \mid y \in \{0,1\}^n, e^\top y \geq n - s\},\\
\X(y) &= \{ x \in X \mid x_i y_i = 0 \ \forall\, i=1,\ldots,n \}.
\end{array}
\end{equation*}
We further define the support set of a vector $z$  by
\[ I_1(z) = \{ i \mid z_i \neq 0 \}, \]
while its complement is defined by
\[ I_0(z) = \{ i \mid z_i = 0 \}. \]
Moreover, we recall the concept of super support set \cite{Beck2016}:
\begin{definition} Let us consider a feasible point $z$ for problem \eqref{prob: cc_prob}. A set $J\subset \{1,\ldots ,n\}$ is called super support of $z$ if it is such that $|J|=s$ and $I_1(z)\subseteq J$.
\end{definition}
We denote by $z_I$ the subvector of $z$ identified by the components contained in an index set $I$. We also denote by $\Pi_C$ the orthogonal projection operator over the closed convex set $C$. We notice that given a feasible point $(x,y)$ of problem \eqref{prob:mi_prob}, the components $I_0(y)$ give an \emph{active subspace} for $x$, i.e., those components identify the subspace where the nonzero components of $x$ lay. We thus have that $I_1(x)\subseteq I_0(y)$.

Nonlinear mixed-integer programs have been characterized exploiting the notion of \textit{neighborhood} \cite{li2006nonlinear,Lucidi2005}. Given a feasible point $(x, y)$, a discrete neighborhood $\N (x,y)$ is a set of feasible points that are close, to some extent, to $(x,y)$ and that contains $(x,y)$ itself. 

We introduce here an example of tailored neighborhood for problem \eqref{prob:mi_prob} that
can be implemented at a reasonable computational cost. Such a neighborhood will also help us to relate our analysis to the  other theoretical tools available in the literature.
\begin{definition}
	\label{ex:discrete_neighborhood}
	Let $d_H:\{0,1\}^n\times\{0,1\}^n\to \mathbb{N}$ denote the Hamming distance. Moreover, let $J(y,\hat{y})=\{i\mid y_i\neq \hat{y}_i\}$ and let ${H}_{J(y,\hat{y})}(\cdot)$ be a function such that $\hat{x}={H}_{J(y,\hat{y})}(x)$ is defined as
	$$\left\{\!\!\begin{array}{ll}
	\hat{x}_h= 0 &\text{if }h\in J(y,\hat{y})\\
	\hat{x}_h=x_h&\text{otherwise}
	\end{array}\right.$$
	Then, given $\rho\in\mathbb{N}$, the  neighborhood is
	\begin{equation}
	\label{D_N}
	\mathcal{N}_\rho(x,y)=\left\{(\hat{x},\hat{y})\mid e^\top\hat{y}\ge n-s,\;d_H(\hat{y},y)\le \rho,\; \hat{x}=H_{J(y,\hat{y})}(x)\right\}.
	\end{equation}
\end{definition}
We notice that this particular definition of neighborhood allows to take into account the potential ``change of status'' of up to $\rho$ variables in the vector $\hat y$ defining an active subspace.
\begin{example}
\label{ex:discrete_neighborhood_hamming}
Consider the problem  \eqref{prob:mi_prob}
with $n=3$ and $s=2$ and let $\rho = 2$.
Let $(x,y)$ be a feasible point defined as follows
$$
(x,y)=
\left(
\begin{array}{l}
1\\
2\\
0
\end{array}
\right)
\left(
\begin{array}{l}
0\\
0\\
1
\end{array}
\right)
$$
The neighborhood ${\cal N}_\rho(x,y)$ is given by
\begin{gather*}
    {\cal N}_2(x,y)=\left\{
\left(\begin{array}{l}
1\\2\\0
\end{array}\right)
\left(\begin{array}{l}
0\\0\\1
\end{array}\right),
\;
\left(\begin{array}{l}
1\\0\\0
\end{array}\right)
\left(\begin{array}{l}
0\\1\\0
\end{array}\right),
\;
\left(\begin{array}{l}
0\\2\\0
\end{array}\right)
\left(\begin{array}{l}
1\\0\\0
\end{array}\right),
\;
\left(\begin{array}{l}
1\\0\\0
\end{array}\right)
\left(\begin{array}{l}
0\\1\\1
\end{array}\right),
\;
\left(\begin{array}{l}
0\\2\\0
\end{array}\right)
\left(\begin{array}{l}
1\\0\\1
\end{array}\right),
\;
\left(\begin{array}{l}
0\\0\\0
\end{array}\right)
\left(\begin{array}{l}
1\\1\\1
\end{array}\right)
\right\}
\end{gather*}
\end{example}

Now, a notion of local optimality for problem \eqref{prob:mi_prob}, depending on the neighborhood $\N (x,y)$, can be introduced: 
\begin{definition}
	A point $(x^*,y^*)\in \X(y^*)\times \Y$ is a local minimizer of problem \eqref{prob:mi_prob} if there exists an $\epsilon> 0$ such that and for all $(\hat{x},\hat{y})\in\mathcal{N}(x^*,y^*)$ it holds
	\begin{equation*}
		f(x^*)\le f(x)\quad \forall\, x \in \mathcal{B}(\hat{x},\epsilon )\cap X(\hat{y}).
	\end{equation*}
\end{definition}

Note that in the above definition the continuous nature of the problem, expressed by the variables $x$, is taken into account by means of the standard ball $\mathcal{B} (\hat{x}, \epsilon)$. The given definition clearly depends on the choice of the discrete neighborhoods. A larger neighborhood $\N (x^*, y^*)$ should give a better local minimizer, but the  computational effort needed to locate the solution may increase.

Inspired by the definition of local optimality for problem \eqref{prob:mi_prob}, we introduce a necessary condition of global optimality for problem \eqref{prob: cc_prob} that allows to take into account possible, beneficial changes of the support and that hence properly captures, from an applied point of view, the essence of the problem.  

Such a condition relies on the use of stationary points related to continuous problems obtained by fixing the binary variables in problem \eqref{prob:mi_prob}, i.e., for a fixed $\bar y \in \Y$,
		\begin{equation}\label{subprob_cont}
			\begin{aligned}
			\min \;& f(x) \\
			\text{s.t. } & x \in \mathcal{X}(\bar y).
			\end{aligned}
		\end{equation}
\begin{definition}
	\label{def: stationary}
	A point $x^*\in {\cal X}$ is called an 
	$\N$-stationary point, if there exists an $y^*\in {\cal Y}$
	such that  
	\begin{enumerate}[label=(\roman*)]
	  \item $(x^*, y^*)$ is feasible for problem \eqref{prob:mi_prob};
	  \item the point $x^*$ is a stationary point of the continuous problem
		\begin{equation*}
			\begin{aligned}
			\min \;& f(x) \\
			\text{s.t. } & x \in \mathcal{X}(y^*);
			\end{aligned}
		\end{equation*}
		\item every $(\hat{x}, \hat{y}) \in  \N_\rho(x^*, y^*)$ satisfies $f(\hat{x}) \geq f(x^*)$ and  if $f(\hat{x}) = f(x^*)$, the point $\hat{x}$ is a \mbox{stationary} point of the continuous problem
		\begin{equation*}
		\begin{aligned}
		\min \;& f(x) \\
		\text{s.t. } & x \in \mathcal{X}(\hat{y}).
		\end{aligned}
		\end{equation*}
	\end{enumerate}
\end{definition}
It is easy to see that the following result stands:
\begin{theorem}\label{th:nec_cond}  
Let $x^*$ be a minimum point of problem \eqref{prob: cc_prob}. Then $x^*$ is an $\N$-stationary point.
\end{theorem}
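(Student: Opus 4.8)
The plan is to exhibit an explicit $y^*\in\Y$ for which the three requirements of \cref{def: stationary} hold, leaning throughout on one elementary observation: for every $y\in\Y$ the set $\X(y)$ is convex and contained in the feasible set $\X$ of \eqref{prob: cc_prob}. Indeed, any $x\in\X(y)$ has $x_i=0$ whenever $y_i=1$, hence $I_1(x)\subseteq I_0(y)$ and $\norm{x}_0\le|I_0(y)|=n-e^\top y\le s$, so $x\in X$ with $\norm{x}_0\le s$; convexity follows because $\X(y)$ is the intersection of $X$ with a coordinate subspace. Once this is in place, the global optimality of $x^*$ over $\X$ does essentially all the work.

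First I would construct $y^*$. Since $x^*$ is feasible for \eqref{prob: cc_prob} we have $\norm{x^*}_0\le s$, so $x^*$ admits a super support $J$, i.e. a set with $|J|=s$ and $I_1(x^*)\subseteq J$. Setting $y^*_i=0$ for $i\in J$ and $y^*_i=1$ otherwise gives $e^\top y^*=n-s$, so $y^*\in\Y$, and $x^*_i y^*_i=0$ for every $i$, because $y^*_i=1$ forces $i\notin J\supseteq I_1(x^*)$ and hence $x^*_i=0$; this is condition (i). Moreover $\X(y^*)=\{x\in X\mid x_i=0\ \forall\, i\notin J\}$, which is convex with $\X(y^*)\subseteq\X$. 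For condition (ii), note that $x^*\in\X(y^*)\subseteq\X$ and $x^*$ minimizes $f$ over the larger set $\X$, hence it minimizes $f$ over $\X(y^*)$ as well; convexity of $\X(y^*)$ and differentiability of $f$ then yield the first-order stationarity condition $\nabla f(x^*)^\top(x-x^*)\ge 0$ for all $x\in\X(y^*)$, which is precisely stationarity for \eqref{subprob_cont} with $\bar y=y^*$.

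Condition (iii) is the crux. Take any $(\hat x,\hat y)\in\N_\rho(x^*,y^*)$; since the neighborhood consists of feasible points of \eqref{prob:mi_prob}, we have $\hat x\in\X(\hat y)\subseteq\X$ (the operator $H_{J(y^*,\hat y)}$ forces $\hat x_i\hat y_i=0$, and zeroing components can only shrink the support). Global optimality of $x^*$ over $\X$ then gives $f(\hat x)\ge f(x^*)$ at once. In the equality case $f(\hat x)=f(x^*)$, the point $\hat x$ is itself a global minimizer of $f$ over $\X$; since $\hat x\in\X(\hat y)\subseteq\X$, it is a fortiori a global minimizer of $f$ over the convex set $\X(\hat y)$, and the same convexity-plus-differentiability argument used for (ii) shows $\hat x$ is stationary for \eqref{subprob_cont} with $\bar y=\hat y$. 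This closes all three conditions.

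The step I expect to require the most care is the feasibility of the neighborhood points in (iii): one must be certain that each $(\hat x,\hat y)\in\N_\rho(x^*,y^*)$ is genuinely feasible for \eqref{prob:mi_prob}, and in particular that $\hat x\in X$, since an infeasible neighbor with smaller objective value would contradict (iii) without contradicting the global optimality of $x^*$ over $\X$. This is guaranteed by reading $\N_\rho$, consistently with the general notion of discrete neighborhood, as a collection of feasible points. The remaining conceptual point worth stating explicitly is the transfer of stationarity in the equality case: an equal objective value promotes $\hat x$ to a global minimizer over all of $\X$, from which stationarity over the slice $\X(\hat y)$ follows immediately.
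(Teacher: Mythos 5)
Your proof is correct. The paper does not actually supply an argument for this theorem (it is dismissed with ``it is easy to see''), and your construction --- building $y^*$ from a super support set, using $\X(y)\subseteq\X$ to transfer global optimality of $x^*$ into stationarity on each convex slice $\X(\hat y)$, and correctly flagging that condition (iii) hinges on the neighborhood consisting of feasible points (hence the need for $\N_{\mathcal{C}\rho}$ when $X\subset\R^n$) --- is precisely the reasoning the statement presupposes.
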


We will show later in this work that the definition of $\N$-stationariy allows to retrieve in a unified view most of the known optimality conditions, if a suitable neighborhood $\N$ is employed.

In \cref{def: stationary} we generically refer to stationary points of problem \eqref{subprob_cont}, namely, to points
satisfying suitable optimality conditions. Then, concerning the assumptions on the feasible set $\mathcal{X}(\bar y)$, we distinguish the two cases:
\begin{itemize}
 \item [(i)] no constraint qualifications hold;
 \item [(ii)] constraint qualifications are satisfied
 and the usual KKT theory can be applied.
\end{itemize}
In case (i), we will refer to the following definition of stationary point of problem \eqref{subprob_cont}.
\begin{definition}\label{stat_1}
Given $\bar y \in \Y$ and $\bar x\in \mathcal{X}(\bar y)$, we say that $\bar x$ is a stationary point of problem \eqref{subprob_cont} if and only if
$$
\bar x=\Pi_{\mathcal{X}(\bar y)}\left[\bar x-\nabla f(\bar x)\right].$$
\end{definition}
We notice that $\mathcal{X}({\bar y})$ is a convex set when $X$ is convex, then the condition given above is a classic stationarity condition for the problem \eqref{subprob_cont}. Case (ii) will be considered later.

\section{Algorithmic framework}
\label{sec:framework}
Here, we discuss an algorithmic framework for the solution of problem \cref{prob: cc_prob} that exploits the reformulation given in problem \cref{prob:mi_prob}.
The proposed approach is somehow related to classic methods for mixed variable programming proposed in the literature (see, e.g.,  \cite{li2006nonlinear,Lucidi2005}).
Roughly speaking, the approach is based at each iteration on the definition
of a suitable neighborhood $\N(x^k, y^k)$ of the current point $(x^k,y^k)$ and on exploratory moves with respect to the continuous variables around the points of the neighborhood.

Concerning the exploration move, it is a local search performed by an Armijo-type line search along the projected gradient direction. The procedure is formalized in \cref{alg:PGLS}.

For any point $(\hat{x}^k, \hat{y}^k) \in \N(\tilde{x}^k, y^k)$ that is not significantly worse (in terms of the objective value) than the current candidate, we perform a local continuous search around $\hat{x}^k$; we skip to the following iteration as soon as a point providing a sufficient decrease of the objective value is found. The algorithm, which we refer to as Sparse Neighborhood Search (SNS) is formally defined in \cref{alg:MISO}.

\begin{algorithm}
	\caption{Projected-Gradient Line Search (PGLS)}
	\label{alg:PGLS}
	{\bf input}: $y \in \Y, x \in \X(y), \gamma \in (0, \frac{1}{2}), \delta \in (0,1), \alpha=1$.\\
	{\bf Step 1}: Set $\hat{x} = \Pi_{\X(y)} \left[x - \nabla f(x)\right]$, $d = \hat{x} - x$.\\
	{\bf Step 2}: If
	\[ f(x + \alpha d ) \leq f(x) + \gamma \alpha \nabla f(x)^\top d, \]
	set $\tilde{x} = x + \alpha d$ and exit.\\
	{\bf Step 3}: Set $\alpha = \delta \alpha$ and go to Step 2.
\end{algorithm}

\begin{algorithm}[H]
	\caption{Sparse Neighborhood Search (SNS)}
	\label{alg:MISO}
	\begin{algorithmic}
		\STATE{\textbf{input:} $y^0 \in \Y, x^0 \in \X(y^0), \xi \geq 0, \theta \in (0,1), \eta_0 > 0, \mu_0 > 0, \delta \in (0, 1)$.}
		\STATE{\textbf{Step 0:} Set $k = 0$.}
		\STATE{\textbf{Step 1:} Compute $\tilde{x}^k$ by PGLS($x^k, y^k$).}
		\STATE{\textbf{Step 2:} Define $W_k = \{ (x,y) \in \N(\tilde{x}^k, y^k) \mid f(x) \leq f(\tilde{x}^k) + \xi \}$.}
		\begin{ALC@g}
			\STATE{\textbf{2.1:} If $W_k \neq \emptyset$, choose $(x^\prime, y^\prime) \in W_k$, set $j=1, x^j=x^\prime$. Otherwise, go to Step 3.\\}
			\STATE{\textbf{2.2:} Compute $x^{j+1}$ by PGLS($x^j, y^\prime$).}
			\STATE{\textbf{2.3:} If $f(x^{j+1}) \leq f(\tilde{x}^k) - \eta_k$, set $x^{k+1} = x^{j+1}, y^{k+1}=y^\prime, \eta_{k+1}=\eta_k$ and go to Step 4.}
			\STATE{\textbf{2.4:} If $\norm { x^j - \Pi_{\X(y^\prime)} \left[x^j - \nabla f(x^j)\right] } > \norm{ x^k - \Pi_{\X(y^k)}\left[x^k - \nabla f(x^k)\right] } + \mu_k$, set $j=j+1$ and go to 2.2. Otherwise, set $W_k = W_k \setminus \{ (x^\prime, y^\prime) \}$ and go to 2.1.}
		\end{ALC@g}
		\STATE{\textbf{Step 3:} Set $x^{k+1} = \tilde{x}^k, y^{k+1}=y^k$. If $f(x^{k+1}) \leq f(x^k) - \eta_k$, set $\eta_{k+1}=\eta_k$. Otherwise set $\eta_{k+1}=\theta \eta_k$.}
		\STATE{\textbf{Step 4:} Set $\mu_{k+1} = \delta \mu_k$, $k=k+1$ and go to Step 1. }
	\end{algorithmic}
\end{algorithm}

\subsection{Convergence analysis}
\label{sec:convergence}
In this section, we prove a set of results concerning the properties of the sequences produced by \cref{alg:MISO}. Note that in this Section we employ the concept of stationarity \cref{eq:proj-stat}. First, we state some suitable assumptions.

\begin{assumption}
	The gradient $\nabla f(x)$ is Lipschitz-continuous, i.e., there exists a constant $L > 0$ such that
	\begin{equation*}
	\norm{ \nabla f(x) - \nabla f(\bar{x}) } \leq L \norm{ x - \bar{x} }
	\end{equation*}
	for all $x, \bar{x} \in \R^n$.
\end{assumption}

\begin{assumption}
	\label{ass:level_set_compact}
	Given $y^0 \in \Y$, $x^0 \in \X(y^0)$ and a scalar $\xi > 0$, the level set
	\[ \mathcal{L}(x^0, y^0) = \{ (x,y) \in \X(y) \times \Y \mid f(x) \leq f(x^0) + \xi \} \]
	is compact.
\end{assumption}
The crucial point in the proposed framework is choosing suitable discrete neighborhoods. 
First, note that when we deal with both continuous and integer variables, the usual notion of convergence to a point needs to be tweaked. 
In particular, we have the following definition.

\begin{definition}
A sequence $\{ (x^k, y^k) \}$ converges to a point $(\bar{x}, \bar{y})$ if for any $\epsilon > 0$ 
there exists an index $k_\epsilon$ such that for all $k \geq k_\epsilon$ we have that $y^k = \bar{y}$ and $\| x^k - \bar{x} \| < \epsilon$.
\end{definition}

To ensure convergence to meaningful points, we need a ``continuity'' assumption on the discrete neighborhoods we explore.

\begin{assumption}
\label{asmp:discrete_neighborhood}
Let $\{ (x^k, y^k) \}$ be a sequence converging to $(\bar{x}, \bar{y})$. Then, for any $(\hat{x}, \hat{y}) \in \N(\bar{x}, \bar{y})$, 
there exists a sequence $\{ (\hat{x}^k, \hat{y}^k) \}$ converging to $(\hat{x}, \hat{y})$ such that $(\hat{x}^k, \hat{y}^k) \in \N(x^k, y^k)$.
\end{assumption}
The assumption above is a mild continuity assumption on the discrete neighborhoods and is equivalent 
to the lower semicontinuity of a point-to-set function as defined in \cite{Berge1963}. 
Next, we properly define the discrete neighborhood used in our algorithmic framework.
 
Now, a discrete neighborhood, by definition, is a set of feasible points. 
In the case when $\X \subset \R^n$, zeroing variables may result in points that are not feasible. 
For this reason, we initially consider an easier version of problem \cref{prob:mi_prob} where $\X = \R^n$. In this case the neighborhood $\N_\rho$ defined in \eqref{D_N} contains feasible points. Moreover, it satisfies \cref{asmp:discrete_neighborhood}, as stated here below. 

\begin{proposition}
\label{prop:discrete_neighborhoods_no_X}
The point-to-set map $\N_\rho(x,y)$  defined in \cref{ex:discrete_neighborhood} satisfies \cref{asmp:discrete_neighborhood}.
\end{proposition}
\begin{proof}
	Let $\{x^k,y^k\}$ be a sequence convergent to $\{\bar{x},\bar{y}\}$. Then, for any $\epsilon>0$, there exists $k_\epsilon$ such that $y^k=\bar{y}$ and $\|x^k-\bar{x}\|\le \epsilon$ for all $k>k_\epsilon$. Let $(\hat{x},\hat{y})\in\mathcal{N}_\rho(\bar{x},\bar{y})$.
	Since $y^k=\bar{y}$ for $k$ sufficiently large, $\{y\mid e^{\top}y\ge n-s,\; d_H(y,y^k)\le \rho\}=\{y\mid e^{\top}y\ge n-s,\; d_H(y,\bar{y})\le \rho\}$, hence $\hat{y}\in\{y\mid d_H(y,y^k)\le \rho\}$ for all $k$.
	
	Let us then consider the sequence $\{\hat{x}^k,\hat{y}^k\}$ where $\hat{y}^k=\hat{y}$ and $\hat{x}^k=H_{J(y^k,\hat{y})}(x^k)$. We can observe that $(\hat{x}^k,\hat{y}^k)\in\mathcal{N}_\rho(x^k,y^k)$. Now, let $j\in\{1,\ldots,n\}$. The set $J(y^k,\hat{y}^k)=J(\bar{y},\hat{y})=J$ is constant for $k$ sufficiently large.
	
	If $j\notin J$, we have
	$$\lim_{k\to \infty}\hat{x}^k_j = \lim_{k\to\infty}x^k_j=\bar{x}_j = \hat{x}_j.$$ 
	On the other hand, if $j\in J$, $\hat{x}^k_j=0$ and $\hat{x}_j=0$. Hence $$\lim_{k\to \infty}\hat{x}^k=\hat{x}$$
	and we thus get the thesis.
\end{proof}

To generalize the previous proposition to the case where $\X \subset \R^n$, we can replace each $(\tilde{x}, \tilde{y}) \in \mathcal{N}_\rho(x,y)$ with the point $(\hat{x}, \hat{y})$, where $\hat{y} = \tilde{y}$ and $\hat{x} = \Pi_{\X(\hat{y})}(\tilde{x})$. In other words, first we change the structure of the active set, then we project the $x$ part onto $\X(\hat{y})$, which is a convex set. In the following, we will refer to this new discrete neighborhood with $\N_{\mathcal{C}\rho}(x,y)$.

\begin{proposition}
\label{prop:discrete_neighborhoods_X}
Let $\{(x^k, y^k)\}$ be a sequence converging to $(\bar{x}, \bar{y})$. Then, the neighborhood $\N_{\mathcal{C}\rho}(\bar{x}, \bar{y})$ satisfies \cref{asmp:discrete_neighborhood}.
\end{proposition}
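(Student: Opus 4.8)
The plan is to reduce the statement to \cref{prop:discrete_neighborhoods_no_X}, exploiting the fact that $\N_{\mathcal{C}\rho}$ is built from $\N_\rho$ by post-composing with an orthogonal projection. Concretely, fix a sequence $\{(x^k,y^k)\}$ converging to $(\bar x,\bar y)$ in the sense of the convergence definition for mixed variables, and take an arbitrary $(\hat x,\hat y)\in\N_{\mathcal{C}\rho}(\bar x,\bar y)$. By construction of $\N_{\mathcal{C}\rho}$, there exists a point $(\tilde x,\tilde y)\in\N_\rho(\bar x,\bar y)$ with $\hat y=\tilde y$ and $\hat x=\Pi_{\X(\hat y)}(\tilde x)$. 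The first move is to apply \cref{prop:discrete_neighborhoods_no_X} to the sequence $\{(x^k,y^k)\}$ and the point $(\tilde x,\tilde y)$: this yields a sequence $\{(\tilde x^k,\tilde y^k)\}$ converging to $(\tilde x,\tilde y)$ with $(\tilde x^k,\tilde y^k)\in\N_\rho(x^k,y^k)$ for every $k$.

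I would then define the candidate sequence for $\N_{\mathcal{C}\rho}$ by setting $\hat y^k=\tilde y^k$ and $\hat x^k=\Pi_{\X(\hat y^k)}(\tilde x^k)$. By the very definition of the neighborhood $\N_{\mathcal{C}\rho}(x^k,y^k)$, namely replacing each element of $\N_\rho(x^k,y^k)$ by its projection onto the corresponding convex slice, we immediately get $(\hat x^k,\hat y^k)\in\N_{\mathcal{C}\rho}(x^k,y^k)$, so membership is free. It then remains to verify that $\{(\hat x^k,\hat y^k)\}$ converges to $(\hat x,\hat y)$ in the mixed-variable sense. The integer part is handled by the discrete notion of convergence: since $\tilde y^k\to\tilde y$ means $\tilde y^k=\tilde y$ for all $k$ large enough, we have $\hat y^k=\hat y$ eventually, which is exactly what the definition of convergence requires for the binary component.

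The crux is the continuous component, where I expect the single substantive point of the argument to lie. I would observe that, precisely because $\hat y^k=\hat y$ for all sufficiently large $k$, the convex set onto which we project is \emph{eventually fixed}: $\X(\hat y^k)=\X(\hat y)$ for $k$ large. This is the key reduction, since it lets me avoid any delicate set-convergence (Mosco-type) argument and appeal only to the continuity—indeed nonexpansiveness—of the projection onto a single closed convex set. Thus for large $k$ we have $\hat x^k=\Pi_{\X(\hat y)}(\tilde x^k)$, and since $\tilde x^k\to\tilde x$, nonexpansiveness of $\Pi_{\X(\hat y)}$ gives $\norm{\hat x^k-\hat x}=\norm{\Pi_{\X(\hat y)}(\tilde x^k)-\Pi_{\X(\hat y)}(\tilde x)}\le\norm{\tilde x^k-\tilde x}\to 0$, so $\hat x^k\to\hat x$. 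Combining the two components yields convergence of $\{(\hat x^k,\hat y^k)\}$ to $(\hat x,\hat y)$, establishing \cref{asmp:discrete_neighborhood} for $\N_{\mathcal{C}\rho}$ and completing the proof. The only care needed is to phrase everything for indices $k$ beyond the threshold at which $\tilde y^k$ stabilizes, but this causes no difficulty since the convergence definition is itself asymptotic.
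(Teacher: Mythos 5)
Your proof is correct and is essentially the paper's own argument: the paper simply states that the result ``follows exactly as in \cref{prop:discrete_neighborhoods_no_X}, recalling the continuity of the projection operator,'' and your write-up fills in precisely those details (pull back to $\N_\rho$, apply the previous proposition, project, and use that $\X(\hat y^k)=\X(\hat y)$ eventually so only continuity of projection onto a fixed convex set is needed). No gaps; your version is just more explicit than the paper's one-line proof.
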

\begin{proof}
The proof follows exactly as in \cref{prop:discrete_neighborhoods_no_X}, recalling the continuity of the projection operator $\Pi_{\X(\hat{y})}$.
\end{proof}

Before turning to the convergence analysis of the algorithm, we prove a further useful preliminary result concerning the neighborhood $\N_\rho$. Notice that this result can be easily extended to $\N_{\mathcal{C}\rho}$. In order to avoid getting a too much cumbersome notation, we will always refer to $\N_\rho$ from now on, even when dealing with additional constraints.
\begin{lemma}\label{prel_lemma}
	Let $y\in \mathcal{Y}$ and $x\in \mathcal{X}(y)$ with $\delta=\|x\|_0$. Let us consider the set
	\begin{equation*}
	\bar{\cal N}(x)=\{(\hat x,\hat y)\mid \hat x = x,\;e^\top\hat y=n-s,\; I_0(\hat y)\supseteq I_1(x)\; \}.
	\end{equation*}
	We have that $$\bar{\cal N}(x)\subseteq\mathcal{N}_\rho(x,y),$$ when $\rho\geq2(s-\delta)$.
\end{lemma}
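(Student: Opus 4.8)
The plan is to fix an arbitrary pair $(\hat x,\hat y)\in\bar{\mathcal N}(x)$ and verify that it satisfies the three defining conditions of $\mathcal{N}_\rho(x,y)$ in \eqref{D_N}, namely $e^\top\hat y\ge n-s$, $d_H(\hat y,y)\le\rho$, and $\hat x = H_{J(y,\hat y)}(x)$. The first condition is immediate, since the definition of $\bar{\mathcal N}(x)$ already imposes $e^\top\hat y = n-s$. Before addressing the other two, I would record the key support inclusions. Since $x\in\mathcal X(y)$ we have $x_iy_i=0$ for every $i$, so any index in $I_1(x)$ forces $y_i=0$; this gives $I_1(x)\subseteq I_0(y)$. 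On the other hand, membership in $\bar{\mathcal N}(x)$ supplies $I_1(x)\subseteq I_0(\hat y)$. Hence $I_1(x)\subseteq I_0(y)\cap I_0(\hat y)$, which is the fact that drives the remaining two verifications.

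For the third condition, I would argue that the coordinates flipped between $y$ and $\hat y$ are exactly ones where $x$ already vanishes. Indeed, from $I_1(x)\subseteq I_0(y)$ and $I_1(x)\subseteq I_0(\hat y)$, every $i\in I_1(x)$ satisfies $y_i=\hat y_i=0$, so $i\notin J(y,\hat y)$; equivalently $J(y,\hat y)\subseteq I_0(x)$. By the definition of $H_{J(y,\hat y)}$, zeroing the components indexed by $J(y,\hat y)$ leaves $x$ unchanged, because those components are already zero. Therefore $H_{J(y,\hat y)}(x)=x=\hat x$, which is precisely the third condition.

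The quantitative heart of the argument is the Hamming-distance bound, and this is the step I expect to require the most care. Since $y$ and $\hat y$ are binary, $d_H(\hat y,y)$ equals the number of indices lying in exactly one of $I_0(y)$ and $I_0(\hat y)$, i.e. $|I_0(y)\,\triangle\,I_0(\hat y)|$. Using $|I_0(\hat y)|=s$ (from $e^\top\hat y=n-s$), $|I_0(y)|\le s$ (from $e^\top y\ge n-s$), and the inclusion $I_1(x)\subseteq I_0(y)\cap I_0(\hat y)$ established above (so that $|I_0(y)\cap I_0(\hat y)|\ge\delta$), the symmetric-difference identity yields
\begin{equation*}
d_H(\hat y,y)=|I_0(y)|+|I_0(\hat y)|-2\,|I_0(y)\cap I_0(\hat y)|\le s+s-2\delta = 2(s-\delta).
\end{equation*}
Consequently, whenever $\rho\ge 2(s-\delta)$ we obtain $d_H(\hat y,y)\le\rho$, completing the verification of all three conditions and hence establishing $\bar{\mathcal N}(x)\subseteq\mathcal{N}_\rho(x,y)$. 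The only genuine subtlety is the bookkeeping: one must make sure the cardinality bounds and the support inclusions line up so that $|I_0(y)\cap I_0(\hat y)|$ is bounded below by $\delta$; everything else follows directly from the definitions.
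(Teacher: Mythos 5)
Your proof is correct and follows essentially the same route as the paper's: both arguments reduce to showing $d_H(\hat y,y)=|I_0(y)|+|I_0(\hat y)|-2|I_0(y)\cap I_0(\hat y)|\le 2(s-\delta)$ by bounding $|I_0(y)\cap I_0(\hat y)|$ from below by $\delta$ via the inclusion $I_1(x)\subseteq I_0(y)\cap I_0(\hat y)$ (the paper phrases this through $|I_1(y)\cap I_1(\hat y)|$ and De Morgan rather than the symmetric difference, but the computation is the same). Your explicit check that $J(y,\hat y)\subseteq I_0(x)$, so that $H_{J(y,\hat y)}(x)=x=\hat x$, is a welcome addition that the paper leaves implicit.
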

\begin{proof}
	Let $(\hat x, \hat y)$ be any point in $\bar{\cal N}(x)$. From the feasibility of $(x,y)$ we have 
	\begin{equation}
	\label{eq:from_y_def}
	\delta\le I_0(y)\le s\qquad n-s\le I_1(y)\le n-\delta.     
	\end{equation}
	Moreover, from the definition of $\bar{\cal N}(x)$, we have $$I_0(\hat{y})=s\qquad I_1(\hat{y}) = n-s.$$
	
	Now, it is easy to see that 
	\begin{equation}
	\label{eq:hamming_remark}
	d_H(y,\hat{y}) = n - |I_0(y) \cap I_0(\hat{y})| - |I_1(y)\cap I_1(\hat{y})|.
	\end{equation}
	We can note that, since $I_0(y)\supseteq I_1(x)$ and $I_0(\hat{y})\supseteq I_1(x)$, it has to be $I_0(y)\cap I_0(\hat{y}) \supseteq I_1(x)$.
	Therefore 
	\begin{equation}
	\label{eq:supseteq}
	|I_0(y) \cap I_0(\hat{y})| \ge |I_1(x)| = \delta.
	\end{equation}
	We can now turn to $I_1(y)\cap I_1(\hat{y})$. Since the latter set can be equivalently written, by De Morgan's law, as $\{1,\ldots,n\}\setminus (I_0(y)\cup I_0(\hat{y}))$, we can obtain
	\begin{equation*}
	\begin{aligned}
	|I_1(y)\cap I_1(\hat{y})| &= |\{1,\ldots,n\}\setminus (I_0(y)\cup I_0(\hat{y}))|\\ &= n - |I_0(y)\cup I_0(\hat{y})|\\&
	=n - (|I_0(y)| + |I_0(\hat{y})| - |I_0(y)\cap I_0(\hat{y})|)\\&
	= n - |I_0(y)| - s + |I_0(y)\cap I_0(\hat{y})|\\
	&\ge n - s - s + \delta\\&= n -2s + \delta,
	\end{aligned}
	\end{equation*}
	where the second last inequality comes from \eqref{eq:from_y_def} and \eqref{eq:supseteq}. Putting everything together back in \eqref{eq:hamming_remark}, we get
	$$d_H(y,\hat{y}) \le n - \delta - n+2s - \delta = 2(s-\delta).$$
	Taking into account that $\rho\geq2(s-\delta)$ in 
	the definition of $\mathcal{N}_\rho(x,y)$, we obtain 
	$$(\hat x, \hat y)\in \mathcal{N}_\rho(x,y),$$
	thus getting the desired result.
\end{proof}
\vspace{1em}

We can now focus on the algorithms. First, we prove a property of \cref{alg:PGLS} that will play an important role in the convergence analysis of \cref{alg:MISO}.
\begin{proposition}
	\label{prop:property_A}
	Given a feasible point $(x, y)$, \cref{alg:PGLS} produces a feasible point $(\tilde{x}, y)$ such that
	\[ f(\tilde{x}) \leq f(x) - \sigmafunc{\norm{ x - \Pi_{\X(y)} \left[x - \nabla f(x)\right]}}, \]
	where the function $\sigmafunc{\cdot} \geq 0$ is such that if $\sigmafunc{t^h} \to 0$ then $t^h \to 0$.
\end{proposition}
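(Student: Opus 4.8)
The plan is to recognize that \cref{alg:PGLS} is a standard Armijo backtracking line search along the projected-gradient direction, so the proof splits into three ingredients: a descent property of the direction $d=\hat x-x$, feasibility of the produced point, and a uniform lower bound on the accepted stepsize that turns the Armijo decrease into a forcing function of the stationarity measure $\norm{x-\Pi_{\X(y)}[x-\nabla f(x)]}=\norm{d}$.

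First I would establish the descent inequality $\nabla f(x)^\top d\le -\norm{d}^2$. Since $\hat x=\Pi_{\X(y)}[x-\nabla f(x)]$ and $\X(y)$ is convex, the variational characterization of the projection gives $(x-\nabla f(x)-\hat x)^\top(z-\hat x)\le 0$ for all $z\in\X(y)$; choosing $z=x$ (feasible by hypothesis) and using $x-\hat x=-d$ yields exactly $\norm{d}^2+\nabla f(x)^\top d\le 0$. Feasibility of the output is immediate: the accepted iterate is $\tilde x=x+\alpha d=(1-\alpha)x+\alpha\hat x$ with $\alpha\in(0,1]$, a convex combination of the two points $x,\hat x\in\X(y)$, hence $\tilde x\in\X(y)$ and $(\tilde x,y)$ is feasible.

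Next I would show finite termination together with a stepsize bound, which is where the Lipschitz continuity of $\nabla f$ enters. From the descent lemma, $f(x+\alpha d)\le f(x)+\alpha\nabla f(x)^\top d+\tfrac{L}{2}\alpha^2\norm{d}^2$, so combining with $\nabla f(x)^\top d\le-\norm{d}^2$ the Armijo test $f(x+\alpha d)\le f(x)+\gamma\alpha\nabla f(x)^\top d$ is guaranteed as soon as $\alpha\le 2(1-\gamma)/L$. Because the backtracking starts at $\alpha=1$ and scales by $\delta\in(0,1)$, this threshold is reached after finitely many steps; moreover, if the accepted $\alpha$ is less than $1$ then the previous trial $\alpha/\delta$ failed the test and therefore exceeded $2(1-\gamma)/L$, giving the uniform lower bound $\alpha\ge\bar\alpha:=\min\{1,\,2\delta(1-\gamma)/L\}>0$, where $\bar\alpha$ depends only on the fixed constants $\gamma,\delta,L$ and not on $(x,y)$.

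Finally I would assemble the estimate: the Armijo inequality and $\nabla f(x)^\top d\le-\norm{d}^2$ give $f(x)-f(\tilde x)\ge-\gamma\alpha\nabla f(x)^\top d\ge\gamma\alpha\norm{d}^2\ge\gamma\bar\alpha\norm{d}^2$, so setting $\sigmafunc{t}=\gamma\bar\alpha\,t^2$ proves the claim, with the degenerate case $d=0$ handled trivially (the test passes at $\alpha=1$ and $\sigmafunc{0}=0$). This $\sigma$ is nonnegative and, being continuous and strictly increasing from $\sigmafunc{0}=0$, satisfies the forcing property that $\sigmafunc{t^h}\to0$ implies $t^h\to0$. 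The one point requiring care, and the main obstacle, is precisely that $\bar\alpha$ must be a single constant independent of the current point so that $\sigma$ is one fixed function usable throughout the convergence analysis; this uniformity is exactly what the global Lipschitz assumption on $\nabla f$ secures.
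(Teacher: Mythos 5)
Your proof is correct and follows essentially the same route as the paper's: the projection variational inequality gives $\nabla f(x)^\top d\le-\norm{d}^2$, Lipschitz continuity of $\nabla f$ yields a uniform lower bound on the accepted stepsize via the failure of the previous backtracking trial, and the two combine into the quadratic forcing function $\sigmafunc{t}=\gamma\min\{1,\bar\alpha\}t^2$. The only differences are cosmetic: you invoke the descent lemma where the paper applies the mean value theorem to the failed Armijo test (producing the slightly different but equally valid constant $2\delta(1-\gamma)/L$ versus $\delta(1-\gamma)/L$), and you make explicit the feasibility of $\tilde x$ as a convex combination of $x$ and $\hat x$, which the paper leaves implicit.
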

\begin{proof}
	By definition, $d = \hat{x} - x$, where $\hat{x} = \Pi_{\X(y)} \left[x - \nabla f(x)\right]$. By the properties of the projection operator, we can write
	\begin{equation*}
	(x - \nabla f(x) - \hat{x})^\top (x - \hat{x}) \leq 0,
	\end{equation*}
	which, with simple manipulations, implies that
	\begin{equation}
	\label{eq: proj_prop}
	\nabla f(x)^\top d \leq - \norm{ d }^2 = - \norm{ x - \Pi_{\X(y)}\left[x - \nabla f(x)\right] }^2.
	\end{equation}
	
	By the instruction of the algorithm, either $\alpha=1$ or $\alpha < 1$.
	
	If $\alpha = 1$, then $\tilde{x} = x + d$ satisfies
	\begin{equation}
	\label{eq:armijo_condition_alpha_1}
	f(\tilde{x}) \leq f(x) + \gamma \nabla f(x)^\top d \leq f(x) - \gamma \norm{ x - \Pi_{\X(y)} \left[x - \nabla f(x)\right] }^2.
	\end{equation}
	
	If $\alpha < 1$, we must have that
	\begin{equation}
	\label{eq:armijo_condition}
	f(x + \alpha d) \leq f(x) + \gamma \alpha \nabla f(x)^\top d,
	\end{equation}
	\begin{equation}
	\label{eq:armijo_condition_unsatisfied}
	f\left(x + \frac{\alpha}{\delta} d\right) > f(x) + \gamma \frac{\alpha}{\delta} \nabla f(x)^\top d.
	\end{equation}
	Applying the mean value theorem to equation \cref{eq:armijo_condition_unsatisfied}, we get
	\begin{equation*}
	\nabla f\left(x + \theta \frac{\alpha}{\delta} d\right)^\top d > \gamma \nabla f(x)^\top d,
	\end{equation*}
	where $\theta \in (0,1)$.
	Adding and subtracting $\nabla f(x)^\top d$, and rearranging, we get
	\begin{equation*}
	(1 - \gamma) \nabla f(x)^\top d > \left[ \nabla f(x) - \nabla f\left(x + \theta \frac{\alpha}{\delta} d\right) \right]^\top d.
	\end{equation*}
	By the Lipschitz-continuity of $\nabla f(x)$, we can write
	\begin{equation*}
	\left[ \nabla f(x) - \nabla f\left(x + \theta \frac{\alpha}{\delta} d\right) \right]^\top d \geq -L \frac{\alpha}{\delta} \norm{ d }^2,
	\end{equation*}
	which means that
	\begin{equation*}
	(1 - \gamma) \nabla f(x)^\top d > -L \frac{\alpha}{\delta} \norm{ d }^2,
	\end{equation*}
	Rearranging, we get
	\begin{equation*}
	\frac{\delta}{L}(1 - \gamma) \nabla f(x)^\top d > -\alpha \norm{ d }^2.
	\end{equation*}
	This last inequality, together with \cref{eq: proj_prop}, yields
	\begin{equation*}
	\frac{\delta}{L}(1 - \gamma) \nabla f(x)^\top d > \alpha \nabla f(x)^\top d,
	\end{equation*}
	and substituting in equation \cref{eq:armijo_condition} we finally get
	\begin{equation*}
	f(\tilde{x}) < f(x) + \gamma \frac{\delta}{L}(1 - \gamma) \nabla f(x)^\top d \leq f(x) - \gamma \frac{\delta}{L}(1 - \gamma) \norm{ x - \Pi_{\X(y)} \left[x - \nabla f(x)\right] }^2.
	\end{equation*}
	This last inequality, together with \cref{eq:armijo_condition_alpha_1}, implies that 
	\[f(\tilde{x}) \leq f(x) - \sigmafunc{ \norm{ x - \Pi_{\X(y)} \left[x - \nabla f(x)\right] }} \]
	where
	\[ \sigmafunc{ t } = \gamma \min \left\{ 1,\frac{\delta}{L}(1-\gamma)  \right\} t^2. \]
\end{proof}
We can now state a couple of preliminary theoretical results.
We first show that \cref{alg:MISO} is well-posed.
\begin{proposition}
	For each iteration $k$, Step 2 of \cref{alg:MISO} terminates in a finite number of steps.
\end{proposition}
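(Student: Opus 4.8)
The plan is to split Step 2 into an outer loop (the repeated selection of a candidate $(x^\prime,y^\prime)$ from $W_k$ at Step 2.1) and an inner loop (the repeated application of \cref{alg:PGLS} at Steps 2.2--2.4 for a fixed candidate), and to bound each separately. First I would observe that $W_k$ is a finite set: it is a subset of $\N(\tilde{x}^k,y^k)=\N_\rho(\tilde{x}^k,y^k)$, which by \cref{ex:discrete_neighborhood} contains exactly one point for each binary vector $\hat{y}$ with $e^\top\hat{y}\ge n-s$ and $d_H(\hat{y},y^k)\le\rho$, and there are only finitely many such $\hat{y}$. Since every return from Step 2.4 to Step 2.1 strictly removes one element from $W_k$, the outer loop is entered at most $|W_k|$ times. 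Hence it suffices to show that each invocation of the inner loop terminates after finitely many iterations $j$.

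For the inner loop I would argue by contradiction, supposing that for some fixed $(x^\prime,y^\prime)$ the test at Step 2.3 never succeeds while the test at Step 2.4 always holds, producing an infinite sequence $\{x^j\}$. Set $c:=\norm{x^k-\Pi_{\X(y^k)}\left[x^k-\nabla f(x^k)\right]}+\mu_k$; since $\mu_k>0$ we have $c>0$, and $c$ is constant throughout Step 2 of iteration $k$. Perpetual validity of the test at Step 2.4 means $\norm{x^j-\Pi_{\X(y^\prime)}\left[x^j-\nabla f(x^j)\right]}>c$ for every $j$. Applying \cref{prop:property_A} to each step $x^j\mapsto x^{j+1}=\text{PGLS}(x^j,y^\prime)$ and using that the forcing function is increasing (explicitly $\sigmafunc{t}=\gamma\min\{1,\frac{\delta}{L}(1-\gamma)\}\,t^2$), I obtain
\[
f(x^{j+1})\le f(x^j)-\sigmafunc{\norm{x^j-\Pi_{\X(y^\prime)}\left[x^j-\nabla f(x^j)\right]}}\le f(x^j)-\sigmafunc{c},
\]
where $\sigmafunc{c}>0$ is a fixed positive constant. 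Telescoping then yields $f(x^{j+1})\le f(x^1)-j\,\sigmafunc{c}\to-\infty$.

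It remains to contradict this divergence, and this is the only delicate point, since it is where the compactness hypothesis enters. I would verify that all inner iterates lie in the compact level set of \cref{ass:level_set_compact}. Indeed $x^1=x^\prime$ satisfies $f(x^\prime)\le f(\tilde{x}^k)+\xi$ by the definition of $W_k$, and because $f$ is non-increasing along the outer iterates and along \cref{alg:PGLS} one has $f(\tilde{x}^k)\le f(x^k)\le f(x^0)$, so $f(x^\prime)\le f(x^0)+\xi$; moreover \cref{prop:property_A} gives $f(x^{j+1})\le f(x^j)$ and $x^j\in\X(y^\prime)$ for all $j$. Hence $(x^j,y^\prime)\in\mathcal{L}(x^0,y^0)$ for every $j$, and since $f$ is continuous on this compact set it is bounded below there, contradicting $f(x^j)\to-\infty$. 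Therefore the inner loop terminates after finitely many steps; combined with the bound $|W_k|$ on the outer loop, this shows that Step 2 of \cref{alg:MISO} terminates in a finite number of steps.
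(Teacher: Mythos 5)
Your proof is correct and follows essentially the same route as the paper's: a contradiction argument for the inner PGLS loop based on the sufficient decrease guaranteed by \cref{prop:property_A} together with the compactness of the level set in \cref{ass:level_set_compact}. The only differences are cosmetic — you make explicit the finiteness of $W_k$ and the strict removal of candidates at Step 2.4 (which the paper leaves implicit), and you reach the contradiction by telescoping a uniform decrease $\sigmafunc{c}>0$ down to $f\to-\infty$, whereas the paper instead lets $\{f(x^j)\}$ converge and uses the forcing-function property of $\sigmafunc{\cdot}$ to conclude that the stationarity measure tends to zero, contradicting the Step 2.4 test.
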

\begin{proof}
	Suppose by contradiction that Steps 2.1-2.4 generate an infinite loop, so that an infinite sequence of points $\{x^j\}$ is produced for which
	\begin{equation}
	\label{eq: contradiction}
	\norm{ x^j - \Pi_{\X(y^\prime)} \left[x^j - \nabla f(x^j)\right] } > \norm{ x^k - \Pi_{\X(y^k)} \left[x^k - \nabla f(x^k)\right] } + \mu_k > 0 \quad \forall j.
	\end{equation}
	By \cref{prop:property_A}, for each $j$ we have that
	\begin{equation}
	\label{eq:x_j+1-x_j}
	f(x^{j+1}) - f(x^j) \leq -\sigmafunc{\norm{ x^j - \Pi_{\X(y^\prime)} \left[x^j - \nabla f(x^j)\right] } },
	\end{equation}
	where $\sigmafunc{ \cdot } \geq 0$. The sequence $\{ f(x^j) \}$ is therefore nonincreasing. Moreover, \cref{eq:x_j+1-x_j} implies that
	\begin{equation}
	\label{eq:abs_value_f_xj}
	\left|f(x^{j+1}) - f(x^j)\right| \geq \sigma\left(\norm{ x^j - \Pi_{\X(y^\prime)} \left[x^j - \nabla f(x^j)\right] }\right).
	\end{equation}
	By \cref{ass:level_set_compact}, $\{ f(x^j) \}$ is lower bounded. Therefore, recalling that $\{ f(x^j) \}$ is nonincreasing, we get that $\{ f(x^j) \}$ converges, which implies that 
	\[ \left|f(x^{j+1}) - f(x^j)\right| \to 0. \]
	By \cref{eq:abs_value_f_xj}, we get that ${\sigma\left( \norm{x^j - \Pi_{\X(y^\prime)} \left[x^j - \nabla f(x^j)\right]} \right) \to 0}$, and, by the properties of $\sigmafunc{ \cdot }$, we finally get that $\norm{x^j - \Pi_{\X(y^\prime)} \left[x^j - \nabla f(x^j)\right]} \to 0$, and this contradicts \cref{eq: contradiction}.
\end{proof}
The next proposition shows some properties of the sequences generated by the algorithm, which will play an important role in the subsequent analysis.
\begin{proposition}
	\label{prop:sequences_properties}
	Let $\{ (x^k, y^k) \}$, $\{\mu_k\}$ and $\{ \eta_k\}$ be the sequences produced by the algorithm. Then:
	\begin{enumerate}[label=\textnormal{(\roman*)}]
		\item \label{seq_prop_point_1} the sequence $\{ f(x^k) \}$ is nonincreasing and convergent;
		\item \label{seq_prop_point_2} the sequence $\{ (x^k, y^k) \}$ is bounded;
		\item \label{seq_prop_point_3} the set $K_u = \{k \mid \eta_k < \eta_{k-1} \}$ of unsuccessful iterates is infinite;
		\item \label{seq_prop_point_4} $\lim_{k \to \infty} \mu_k = 0$;
		\item \label{seq_prop_point_5} $\lim_{k \to \infty} \eta_k = 0$;
		\item \label{seq_prop_point_6} $\lim_{k \to \infty} \norm{ x^k - \Pi_{\X(y^k)} \left[x^k - \nabla f(x^k)\right] } = 0$.
	\end{enumerate}
\end{proposition}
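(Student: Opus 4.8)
The plan is to derive parts (i)--(vi) essentially in the order stated, since each claim feeds into the next, with the only genuine content concentrated in the monotone-decrease bookkeeping of (i) and the final stationarity estimate (vi).

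First I would establish (i). The key observation is that, whichever branch the iteration takes, the new iterate never has a larger objective value than the point $\tilde{x}^k$ produced by PGLS at Step~1: if Step~2.3 triggers then $f(x^{k+1})\le f(\tilde{x}^k)-\eta_k\le f(\tilde{x}^k)$, while if the algorithm falls through to Step~3 then $x^{k+1}=\tilde{x}^k$ exactly. Combining this with the descent guarantee of \cref{prop:property_A}, namely $f(\tilde{x}^k)\le f(x^k)-\sigmafunc{\norm{x^k-\Pi_{\X(y^k)}[x^k-\nabla f(x^k)]}}$ with $\sigmafunc{\cdot}\ge 0$, yields $f(x^{k+1})\le f(x^k)$, so $\{f(x^k)\}$ is nonincreasing. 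Since PGLS, the neighborhood, and the projection all return feasible points, each $(x^k,y^k)\in\X(y^k)\times\Y$, and because $f(x^k)\le f(x^0)\le f(x^0)+\xi$, every such pair lies in the level set $\mathcal{L}(x^0,y^0)$, which is compact by \cref{ass:level_set_compact}; hence $f$ is bounded below along the sequence and the nonincreasing sequence $\{f(x^k)\}$ converges. This same membership in the compact level set immediately gives the boundedness claim (ii).

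Next I would handle the control sequences. Since Step~4 runs once per outer iteration and sets $\mu_{k+1}=\delta\mu_k$ with $\delta\in(0,1)$, we get $\mu_k=\delta^k\mu_0\to 0$, proving (iv). For (iii) I would argue by contradiction: if $K_u$ were finite, there would be an index $\bar{k}$ beyond which $\eta_k$ never decreases, so $\eta_k\equiv\bar\eta>0$. But $\eta_{k+1}=\eta_k$ can occur only through Step~2.3 or through Step~3 with the sufficient-decrease test satisfied, and in both cases $f(x^{k+1})\le f(x^k)-\eta_k=f(x^k)-\bar\eta$; telescoping over the infinitely many iterations $k\ge\bar{k}$ would force $f(x^k)\to-\infty$, contradicting (i)--(ii), so $K_u$ is infinite. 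Given (iii), part (v) follows because $\{\eta_k\}$ is nonincreasing and is multiplied by $\theta\in(0,1)$ at each of the infinitely many indices in $K_u$, whence $\eta_k\to 0$.

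Finally, for the stationarity measure (vi) I would avoid any subsequence argument and instead reuse the chain assembled in (i). Writing $\sigmafunc{\norm{x^k-\Pi_{\X(y^k)}[x^k-\nabla f(x^k)]}}\le f(x^k)-f(\tilde{x}^k)\le f(x^k)-f(x^{k+1})$, the convergence of $\{f(x^k)\}$ forces the right-hand side to zero, so $\sigmafunc{\norm{x^k-\Pi_{\X(y^k)}[x^k-\nabla f(x^k)]}}\to 0$, and the stated property of $\sigmafunc{\cdot}$ (if $\sigmafunc{t^h}\to 0$ then $t^h\to 0$) yields the claim. I expect the main obstacle to be bookkeeping rather than conceptual: one must verify the uniform inequality $f(x^{k+1})\le f(\tilde{x}^k)$ across both exit branches (Step~2.3 and Step~3), since this single inequality is precisely what makes both the monotonicity in (i) and the telescoping estimate in (vi) go through; some care is also needed in (iii) with the index shift between the iterations where $\eta$ is shrunk and the set $K_u$ as indexed.
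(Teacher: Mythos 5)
Your proposal is correct and follows essentially the same route as the paper's own proof: monotonicity and boundedness via the compact level set, contradiction via telescoping for the infiniteness of $K_u$, geometric decay for $\mu_k$ and $\eta_k$, and the descent estimate of the projected-gradient line search combined with convergence of $\{f(x^k)\}$ for the stationarity measure. The only difference is that you make explicit the branch-by-branch verification of $f(x^{k+1})\le f(\tilde{x}^k)$, which the paper compresses into ``by the instructions of the algorithm.''
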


\begin{proof}
	\begin{enumerate}[label=\textnormal{(\roman*)}]
		\item The instructions of the algorithm and \cref{prop:property_A} imply that $\{ f(x^k) \}$ is nonincreasing, and \cref{ass:level_set_compact} implies that $\{ f(x^k) \}$ is lower bounded. Hence, $\{ f(x^k) \}$ converges.
		\item The instructions of the algorithm imply that each point $(x^k, y^k)$ belongs to the level set $\mathcal{L}(x^0, y^0)$, which is compact by \cref{ass:level_set_compact}. Therefore, $\{ (x^k, y^k) \}$ is bounded.
		\item Suppose that $K_u$ is finite. Then there exists $\bar{k} > 0$ such that all iterates satisfying $k > \bar{k}$ are successful, i.e.,
		\[ f(x^k) \leq f(x^{k-1}) - \eta_{k-1}, \]
		and $\eta_k = \eta_{k-1} = \eta > 0$ for all $k \geq \bar{k}$. Since $\eta > 0$, this implies that $\{f(x^k)\}$ diverges to $-\infty$, in contradiction with \cref{seq_prop_point_1}.
		\item Since, for all $k$, $\mu_{k+1} = \delta \mu_k$, where $\delta \in (0, 1)$, the claim holds.
		\item If $k \in K_u$, then $\eta_{k+1} = \theta \eta_k$, where $\theta \in (0,1)$. Since $K_u$ is infinite and $\eta_{k+1}=\eta_k$ if $k\notin K_u$, the claim holds.
		\item By \cref{prop:property_A}, we have that
		\[ f(\tilde{x}^k) - f(x^k) \leq -\sigmafunc{ \norm{ x^k - \Pi_{\X(y^k)} \left[x^k - \nabla f(x^k)\right]} }. \]
		By the instructions of the algorithm, $f(x^{k+1}) \leq f(\tilde{x}^k)$, and so we can write
		\[ f(x^{k+1}) - f(x^k) \leq -\sigmafunc{ \norm{ x^k - \Pi_{\X(y^k)} \left[x^k - \nabla f(x^k)\right]} }, \]
		i.e.,
		\[ 	\left| f(x^{k+1}) - f(x^k) \right| \geq \sigmafunc{ \norm{ x^k - \Pi_{\X(y^k)} \left[x^k - \nabla f(x^k)\right]} }. \]
		Since $\{ f(x^k) \}$ converges, we get that ${\sigmafunc{ \norm{ x^k - \Pi_{\X(y^k)} \left[x^k - \nabla f(x^k)\right]} } \to 0}$. By the properties of $\sigmafunc{ \cdot }$, we get that $\norm{ x^k - \Pi_{\X(y^k)} \left[x^k - \nabla f(x^k)\right]} \to 0$.
	\end{enumerate}
\end{proof}

Before stating the main theorem of this section, it is useful to summarize some theoretical properties of the subsequence $\{(x^k, y^k)\}_{K_u}$ of the unsuccessful iterates. As the proof shows, the next proposition follows easily from the theoretical results we have shown above.
\begin{proposition}
	\label{prop:sequence_K_u}
	Let $\{(x^k, y^k)\}$ be the sequence of iterates generated by \cref{alg:MISO}, and let $K_u = \{k \mid \eta_k < \eta_{k-1} \}$. Then:
	\begin{enumerate}[label=\textnormal{(\roman*)}]
		\item $\{(x^k, y^k)\}_{K_u}$ admits accumulation points;
		\item for any accumulation point $(x^*, y^*)$ of the sequence $\{(x^k, y^k)\}_{K_u}$, every \linebreak ${(\hat{x}, \hat{y}) \in \N(x^*, y^*)}$ is an accumulation point of a sequence $\{(\hat{x}^k, \hat{y}^k)\}_{K_u}$ where $(\hat{x}^k, \hat{y}^k) \in \N(x^k, y^k)$.
	\end{enumerate}
\end{proposition}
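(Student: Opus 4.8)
The plan is to derive both claims directly from the properties already collected in \cref{prop:sequences_properties}, combined with the continuity hypothesis \cref{asmp:discrete_neighborhood}; as anticipated in the text, no new machinery is required, and the argument is essentially an exercise in handling the mixed continuous/discrete notion of convergence.

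For part (i), I would first recall that $K_u$ is infinite and that the whole sequence $\{(x^k,y^k)\}$ (hence its subsequence indexed by $K_u$) is bounded, both facts being established in \cref{prop:sequences_properties}. The only point requiring attention is that convergence here is in the mixed sense of the dedicated definition, so a plain Bolzano--Weierstrass argument must be adapted to the discrete component. Since $y^k\in\{0,1\}^n$ takes values in a finite set and $K_u$ is infinite, some $\bar y\in\Y$ is attained by $y^k$ for infinitely many $k\in K_u$; restricting to those indices fixes the binary part, after which boundedness of the corresponding $\{x^k\}$ yields, via Bolzano--Weierstrass, a further subsequence along which $x^k\to x^*$. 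Along this subsequence one has $y^k=\bar y$ and $\|x^k-x^*\|\to 0$, which is exactly the required notion of convergence, so $(x^*,\bar y)$ is an accumulation point of $\{(x^k,y^k)\}_{K_u}$.

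For part (ii), let $(x^*,y^*)$ be any accumulation point of $\{(x^k,y^k)\}_{K_u}$, and let $K\subseteq K_u$ be an infinite index set along which $\{(x^k,y^k)\}_{K}$ converges to $(x^*,y^*)$. Viewing this subsequence as a sequence in its own right, I would invoke \cref{asmp:discrete_neighborhood}: for the prescribed $(\hat x,\hat y)\in\N(x^*,y^*)$ there exists a sequence $\{(\hat x^k,\hat y^k)\}_{K}$ with $(\hat x^k,\hat y^k)\in\N(x^k,y^k)$ that converges to $(\hat x,\hat y)$. To produce a sequence indexed by all of $K_u$, as the statement demands, I would complete the definition on $K_u\setminus K$ by simply setting $(\hat x^k,\hat y^k)=(x^k,y^k)$, which lies in $\N(x^k,y^k)$ because every discrete neighborhood contains its own center. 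The resulting sequence $\{(\hat x^k,\hat y^k)\}_{K_u}$ then satisfies $(\hat x^k,\hat y^k)\in\N(x^k,y^k)$ for all $k\in K_u$ and admits $(\hat x,\hat y)$ as an accumulation point along $K$, which is precisely the assertion.

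The single place calling for care is the matching between \cref{asmp:discrete_neighborhood}, which is phrased for a convergent full sequence, and its application to a convergent subsequence: the assumption applies verbatim once that subsequence is relabeled as a sequence, and the arbitrary completion on $K_u\setminus K$ preserves membership in the neighborhoods without disturbing the accumulation point. I expect this re-indexing to be the only genuinely delicate step of an otherwise routine argument.
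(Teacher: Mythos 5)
Your proof is correct and follows essentially the same route as the paper, which simply cites the boundedness from \cref{prop:sequences_properties} for part (i) and \cref{prop:discrete_neighborhoods_X} (i.e., the validity of \cref{asmp:discrete_neighborhood} for the neighborhood in use) for part (ii). Your additional care with the pigeonhole argument on the binary component, the relabeling of the convergent subsequence, and the completion on $K_u\setminus K$ using the fact that $(x^k,y^k)\in\N(x^k,y^k)$ merely makes explicit the details the paper leaves implicit.
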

\begin{proof}
	\begin{enumerate}[label=\textnormal{(\roman*)}]
		\item By \cref{prop:sequences_properties}, \cref{seq_prop_point_2}, $\{(x^k, y^k)\}$ is bounded. Therefore, $\{(x^k, y^k)\}_{K_u}$ is also bounded, and so it admits accumulation points.
		\item \cref{prop:discrete_neighborhoods_X} implies that every $(\hat{x}, \hat{y}) \in \N(x^*, y^*)$ is an accumulation point of a sequence $\{(\hat{x}^k, \hat{y}^k)\}_{K_u}$, where $(\hat{x}^k, \hat{y}^k) \in \N(x^k, y^k)$.
	\end{enumerate}
\end{proof}

We can now prove the main theoretical result of this section.
\begin{theorem}\label{theo_convergence}
	Let $\{(x^k, y^k)\}$ be the sequence generated by \cref{alg:MISO}. Every accumulation point $(x^*,y^*)$ of $\{(x^k, y^k)\}_{K_u}$ is such that $x^*$ is an $\N$-stationary point of problem \cref{prob: cc_prob}.
\end{theorem}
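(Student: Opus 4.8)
The plan is to verify directly the three requirements of \cref{def: stationary} at the limit point, using the stationarity notion of \cref{stat_1}. Fix a subsequence $K'\subseteq K_u$ along which $(x^k,y^k)\to(x^*,y^*)$; by the discrete notion of convergence we may assume $y^k=y^*$ for all large $k\in K'$, so that $\X(y^k)=\X(y^*)$ eventually. Conditions (i) and (ii) are the quick part. Feasibility of $(x^*,y^*)$ for \eqref{prob:mi_prob} holds because every iterate satisfies $y^k\in\Y$ and $x^k\in\X(y^k)$, and these constraints survive the limit by closedness of $X$ and finiteness of $\Y$. For (ii) I would invoke the last item of \cref{prop:sequences_properties}, namely $\norm{x^k-\Pi_{\X(y^k)}[x^k-\nabla f(x^k)]}\to0$; passing to the limit along $K'$ with $y^k=y^*$ and using continuity of $\Pi_{\X(y^*)}$ and of $\nabla f$ yields $x^*=\Pi_{\X(y^*)}[x^*-\nabla f(x^*)]$.

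The heart of the argument is condition (iii). Fix an arbitrary $(\hat x,\hat y)\in\N_\rho(x^*,y^*)$. By \cref{prop:sequence_K_u}(ii) there is a sequence $(\hat x^k,\hat y^k)\in\N(x^k,y^k)$, along a suitable index set contained in $K'$, converging to $(\hat x,\hat y)$ (so $\hat y^k=\hat y$ eventually). The key observation is that, for $k\in K_u$, iteration $k-1$ ended in Step~3; since there $x^k=\tilde x^{k-1}$ and $y^k=y^{k-1}$, the exploration that failed at iteration $k-1$ was exactly the one over $W_{k-1}=\{(x,y)\in\N(x^k,y^k)\mid f(x)\le f(x^k)+\xi\}$, and \emph{every} candidate in $W_{k-1}$ was discarded at Step~2.4. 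I would then split on whether $(\hat x^k,\hat y^k)\in W_{k-1}$. If $f(\hat x^k)>f(x^k)+\xi$ infinitely often, then in the limit $f(\hat x)\ge f(x^*)+\xi>f(x^*)$ and the second clause of (iii) is vacuous. Otherwise $(\hat x^k,\hat y^k)\in W_{k-1}$ for large $k$, so it was explored: there is an inner PGLS chain $z^1=\hat x^k,z^2,\dots,z^{m_k}$ (run with respect to $\X(\hat y^k)$) discarded because the residual at $z^{m_k}$ fell below $\norm{x^{k-1}-\Pi_{\X(y^{k-1})}[x^{k-1}-\nabla f(x^{k-1})]}+\mu_{k-1}$, while $f(z^{m_k+1})>f(x^k)-\eta_{k-1}$. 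Since $f$ is nonincreasing along the chain, $f(\hat x^k)=f(z^1)\ge f(z^{m_k+1})>f(x^k)-\eta_{k-1}$, and letting $k\to\infty$ with $\eta_{k-1}\to0$ (\cref{prop:sequences_properties}) gives $f(\hat x)\ge f(x^*)$, which settles the first part of (iii) in all cases.

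For the stationarity clause, suppose $f(\hat x)=f(x^*)$; with $\xi>0$ this rules out the first case and forces $(\hat x^k,\hat y^k)\in W_{k-1}$ eventually, so the inner chain above is available. A squeeze applies: from $f(x^k)-\eta_{k-1}<f(z^{m_k})\le f(z^1)=f(\hat x^k)$, with both ends converging to $f(x^*)$, the total objective decrease along the chain satisfies $f(z^1)-f(z^{m_k})\to0$. Applying \cref{prop:property_A} to the first chain step (or, when $m_k=1$, reading the Step~2.4 test directly at $z^1$) gives $\sigma\!\left(\norm{z^1-\Pi_{\X(\hat y^k)}[z^1-\nabla f(z^1)]}\right)\le f(z^1)-f(z^2)\le f(z^1)-f(z^{m_k})\to0$, whence $\norm{\hat x^k-\Pi_{\X(\hat y^k)}[\hat x^k-\nabla f(\hat x^k)]}\to0$ by the defining property of $\sigma$. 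Passing to the limit ($\hat x^k\to\hat x$, $\hat y^k=\hat y$ eventually, continuity of projection and gradient) yields $\hat x=\Pi_{\X(\hat y)}[\hat x-\nabla f(\hat x)]$, as required.

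The step I expect to be the main obstacle is precisely this last bridge: the discard test in Step~2.4 controls the projected-gradient residual at the \emph{end} $z^{m_k}$ of the inner chain, whereas the point actually converging to $\hat x$ is its \emph{start} $z^1=\hat x^k$, and $z^{m_k}$ need not converge to $\hat x$. The resolution is not to track $z^{m_k}$ at all, but to exploit the vanishing total decrease along the chain together with \cref{prop:property_A} so as to transfer smallness of the residual from the chain back to its starting point. A secondary point to handle carefully is the bookkeeping between the outer index $k$ and the exploration carried out at iteration $k-1$, together with the role of $\xi>0$ in discriminating the two cases of the dichotomy.
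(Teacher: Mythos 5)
Your proposal is correct and follows essentially the same route as the paper's proof: feasibility from closedness, condition (ii) from item (vi) of \cref{prop:sequences_properties}, the bound $f(\hat{x}^k) > f(x^k) - \eta_{k-1}$ from the failed exploration at unsuccessful iterations, and, for the stationarity clause, the squeeze on objective values along the inner PGLS chain combined with \cref{prop:property_A} applied to its \emph{first} step so that the vanishing decrease forces the projected-gradient residual at $\hat{x}^k$ to vanish. If anything, your handling of the $W_{k-1}$ dichotomy, the role of $\xi>0$, and the iteration indexing is slightly more explicit than the paper's, but the argument is the same.
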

\begin{proof}
	Let $(x^*, y^*)$ be an accumulation point of $\{(x^k, y^k)\}_{K_u}$. We must show that conditions (i)-(iii) of \cref{def: stationary} are satisfied.
	\begin{enumerate}[label=\textnormal{(\roman*)}]
		\item From the instructions of \cref{alg:MISO} the iterates $(x^k,y^k)$ belong to the set ${\cal L}(x^0,y^0)$, which is closed from \cref{ass:level_set_compact}. Any limit point $(x^*,y^*)$ belongs to ${\cal L}(x^0,y^0)$ and is thus feasible for problem \eqref{prob:mi_prob}.
		\item The result follows from \cref{prop:sequences_properties}, \cref{seq_prop_point_6}.
		\item Since $K_u$ is an infinite subset of unsuccessful iterations, recalling that $x^k = \tilde{x}^{k-1}$, $y^k = y^{k-1}$, and setting $\hat{x}^k = \hat{x}^{k-1}$, $\hat{y}^k = \hat{y}^{k-1}$ for all $(\hat{x}^{k-1}, \hat{y}^{k-1}) \in \N(\tilde{x}^{k-1}, y^{k-1})$, the test at Step 3 fails at iteration $k$, and therefore
		\begin{equation*}
		f(\hat{x}^k) > f(x^k) - \eta_{k-1}
		\end{equation*}
		for all $(\hat{x}^k, \hat{y}^k) \in \N(x^k, y^k)$. Since the sequence $\{f(x^k)\}$ is nonincreasing (\cref{prop:sequences_properties}, \cref{seq_prop_point_1}), we can write
		\begin{equation*}
		f(x^*) \leq f(x^k) < f(\hat{x}^k) + \eta_{k-1}.
		\end{equation*}
		for all $(\hat{x}^k, \hat{y}^k) \in \N(x^k, y^k)$. Taking limits, we get from \cref{prop:sequences_properties}, \cref{seq_prop_point_5}, \cref{prop:discrete_neighborhoods_no_X}, and by the continuity of $f$ that $f(x^*)\le f(\hat{x})$ for all $(\hat{x}, \hat{y}) \in \N(x^*, y^*)$.
		
		Now, note that \cref{seq_prop_point_1} of \cref{prop:sequences_properties} ensures the existence of $f^* \in \R$ satisfying
		\begin{equation}
		\label{eq:theo_1}
		\lim\limits_{k \to \infty} f(x^k) = f(x^*) = f^*.
		\end{equation}
		Consider any $(\hat{x}, \hat{y}) \in \N(x^*, y^*)$ such that
		\begin{equation}
		\label{eq:theo_2}
		f(\hat{x}) = f^*.
		\end{equation}
		\Cref{prop:sequence_K_u} implies that $(\hat{x}, \hat{y})$ is an accumulation point of a sequence $\{(\hat{x}^k, \hat{y}^k)\}_{K_u}$, where $(\hat{x}^k, \hat{y}^k) \in \N
		(x^k, y^k)$. Since $k \in K_u$, we have that $x^k = \tilde{x}^{k-1}$, $y^k = y^{k-1}$. Setting $\hat{x}^k = \hat{x}^{k-1}$, $\hat{y}^k = \hat{y}^{k-1}$ for all $(\hat{x}^{k-1}, \hat{y}^{k-1}) \in \N(\tilde{x}^{k-1}, y^{k-1})$, by \cref{eq:theo_1} and \cref{eq:theo_2} we get, for $k$ sufficiently large,
		\begin{equation*}
		f(\hat{x}^k) < f(x^k) + \xi.
		\end{equation*}
		Therefore, for such values of $k$, $(\hat{x}^k, \hat{y}^k) \in W_k$, and Steps 3.2-3.4 produce the points $x_k^2,\ldots,x_k^{j_k^*}$ (where $j_k^*$ is the finite number of iterations of Steps 2.2-2.4 until the test at Step 2.4 fails), which, by the instructions at Step 2.2 and by \cref{prop:property_A}, satisfy
		\begin{equation}
		\label{eq:theo_3}
		f(\hat{x}^k) \geq f(x_k^2) \geq \ldots \geq f(x_k^{j_k^*}).
		\end{equation}
		Since $k \in K_u$, Step 2.3 fails, and we can write
		\begin{equation}
		\label{eq:theo_4}
		f(x_k^{j_k^*}) > f(\tilde{x}^k) - \eta_k \geq f(x^k) - \eta_{k-1}.
		\end{equation}
		Moreover, as the sequence $\{(\hat{x}^k, \hat{y}^k)\}_{K_u}$ converges to the point $(\hat{x}, \hat{y})$, by \cref{eq:theo_1}, \cref{eq:theo_2}, \cref{eq:theo_3}, \cref{eq:theo_4}, and by \cref{seq_prop_point_5} of \cref{prop:sequences_properties}, we obtain
		\begin{equation*}
		f^* = \lim\limits_{k \to \infty, k \in K_u} f(\hat{x}^k) = \lim\limits_{k \to \infty, k \in K_u} f(x_k^2) = \lim\limits_{k \to \infty, k \in K_u} f(x^k) = f^*.
		\end{equation*}
		By \cref{prop:property_A}, we have that
		\begin{equation*}
		f(x_k^2) \leq f(\hat{x}^k) - \sigma\left(\norm{ \hat{x}^k - \Pi_{\X(\hat{y}^k)} \left[\hat{x}^k - \nabla f(\hat{x}^k)\right]}\right),
		\end{equation*}
		which can be rewritten as
		\begin{equation*}
		\left| f(x_k^2) - f(\hat{x}^k) \right| \geq \sigma\left(\norm{ \hat{x}^k - \Pi_{\X(\hat{y}^k)} \left[\hat{x}^k - \nabla f(\hat{x}^k)\right]}\right).
		\end{equation*}
		Taking limits for $k \to \infty, k \in K_u$, we finally get
		\begin{equation*}
		\norm{ \hat{x} - \Pi_{\X(\hat{y})} \left[\hat{x} - \nabla f(\hat{x})\right]} = 0,
		\end{equation*}
		and the claim holds.
	\end{enumerate}
\end{proof}

In \cite{Beck2016}, the concept of \textit{basic feasibility} (BF) introduced in \cite{Beck2013} is extended to problem \eqref{prob: cc_prob}:
\begin{definition}\label{def_BF}
	A feasible point $x^*$ of problem \eqref{prob: cc_prob}  is referred to as basic feasible if, for any super support set  $J$, letting $y_J\in\{0,1\}^n$ such that $y_i=0$ if $i\in J$ and $y_i=1$ otherwise, there exists $L>0$ such that $$x^*=\Pi_{\X(y_J)}\left(x^*+d\right),$$
	where $d_i = -\frac{1}{L}\nabla_i f(x^*)$ if $i\in J$ and $d_i=0$ otherwise.
\end{definition}

Note that BF stationarity requires that, for any $y_J$ defining a super support set, $x^*=\Pi_{\X(y_J)}[x^*+d]$, where $d_J=-\frac{1}{L}\nabla_J f(x^*)$ and $d_{\bar{J}}=0,$ whereas the condition in \cref{stat_1} requires $x^*=\Pi_{\X(y_J)}[x^*-\nabla f(x^*)]$. In fact, in the case of our problem the two conditions are equivalent, as we show below.

\begin{lemma}\label{BF_proj}
	Let $y\in\cal Y$ and  $x^*\in\X(y)$. Then $x^*$ satisfies $$x^*=\Pi_{\X(y)}(x^*+d),$$ where $d_{I_0(y)}=-\frac{1}{L}\nabla_J f(x^*)$ and $d_{I_1(y)}=0,$  if and only if it satisfies $$x^*=\Pi_{\X(y)}(x^*-\nabla f(x^*)).$$
\end{lemma}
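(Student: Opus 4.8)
The plan is to reduce both fixed-point equations to one and the same variational inequality, using the standard characterization of the projection onto a closed convex set. Recall that for a closed convex set $C$ and a point $z$, the point $p=\Pi_C(z)$ is characterized by $(z-p)^\top(x-p)\le 0$ for all $x\in C$. Since $\X(y)$ is convex whenever $X$ is convex (as already noted after \cref{stat_1}), I would apply this characterization with $C=\X(y)$ and $p=x^*$ to each of the two conditions separately, and then compare the resulting inequalities.

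First I would treat the condition with the scaled step. Setting $z=x^*+d$ and $p=x^*$, the projection characterization gives $d^\top(x-x^*)\le 0$ for all $x\in\X(y)$. Because $d_{I_1(y)}=0$ and $d_{I_0(y)}=-\tfrac1L\nabla_{I_0(y)} f(x^*)$, the inner product collapses onto the $I_0(y)$ block, and dividing through by the positive constant $1/L$ flips the sign, so the condition is equivalent to $\sum_{i\in I_0(y)}\nabla_i f(x^*)\,(x_i-x^*_i)\ge 0$ for all $x\in\X(y)$. Next I would treat the condition with the full gradient. Setting $z=x^*-\nabla f(x^*)$ and $p=x^*$, the characterization gives $\nabla f(x^*)^\top(x-x^*)\ge 0$ for all $x\in\X(y)$. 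Here the key observation is that every $x\in\X(y)$ satisfies $x_{I_1(y)}=0$, and $x^*\in\X(y)$ likewise has $x^*_{I_1(y)}=0$; hence $x_i-x^*_i=0$ for every $i\in I_1(y)$, so those coordinates drop out of the inner product. The condition therefore reduces to the very same inequality $\sum_{i\in I_0(y)}\nabla_i f(x^*)\,(x_i-x^*_i)\ge 0$ for all $x\in\X(y)$, and the equivalence follows immediately.

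The only real subtlety — the part I would be most careful about — is to keep straight the two distinct reasons why the $I_1(y)$ block disappears in the two cases. In the scaled-step condition it vanishes by construction, because $d$ was defined to be zero on $I_1(y)$; in the full-gradient condition it vanishes because feasibility of both $x$ and $x^*$ forces those coordinates to zero, so that the (possibly nonzero) gradient entries $\nabla_{I_1(y)} f(x^*)$ never actually enter the variational inequality. Once this is recognized, the positive scaling $1/L$ is immaterial, since it does not affect the direction of the inequality, and no further computation is needed.
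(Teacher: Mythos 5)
Your proof is correct, but it takes a genuinely different route from the paper's. The paper argues about the two projected points themselves: it sets $\hat{x}=\Pi_{\X(y)}[x^*-\nabla f(x^*)]$ and $\tilde{x}=\Pi_{\X(y)}[x^*+d]$, writes the variational inequality for each, sums the two, and concludes $\hat{x}=\tilde{x}$, from which the equivalence of the two fixed-point equations is immediate. You instead never compare the two projections: you apply the characterization $(z-p)^\top(x-p)\le 0$ with the \emph{candidate} $p=x^*$ to each fixed-point equation separately and show both collapse to the single inequality $\nabla_{I_0(y)}f(x^*)^\top\bigl(x_{I_0(y)}-x^*_{I_0(y)}\bigr)\ge 0$ for all $x\in\X(y)$ --- in one case because $d$ vanishes on $I_1(y)$ by construction, in the other because feasibility forces $x_{I_1(y)}=x^*_{I_1(y)}=0$. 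Your route is more elementary and, strictly speaking, more robust: the paper's identification $\hat{x}^p_{I_0(y)}=\tilde{x}^p_{I_0(y)}$ silently drops the factor $\frac1L$ (the two pre-projection points agree on $I_0(y)$ only when $L=1$), so the stronger claim $\hat{x}=\tilde{x}$ that the paper actually proves need not hold for general $L>0$, whereas the fixed-point equivalence does, precisely because --- as you observe --- a positive rescaling of $d$ leaves the variational inequality unchanged. The one hypothesis you use implicitly and should state is $L>0$, which is needed to divide without flipping the inequality; it is guaranteed by the definition of basic feasibility.
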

\begin{proof}
	Let us consider
	$$\hat{x} = \Pi_{\X(y)}[x^*-\nabla f(x^*)],\qquad \tilde{x} = \Pi_{\X(y)}[x^*+d].$$
	Let us denote $\hat{x}^p = x^*-\nabla f(x^k)$ and $ \tilde{x}^p = x^*+d$ Since both $\hat{x}$ and $\tilde{x}$ belong to $\X(y)$, we have $\hat{x}_{I_1(y)}=0$ and $\tilde{x}_{I_1(y)}=0$. From the well-known properties of the projection operator on a convex set, we get:
	$$(\hat{x}-\hat{x}^p)^{\top}(\hat{x}-x)\le 0\quad\forall\,x\in X(y),$$
	$$(\tilde{x}-\tilde{x}^p)^{\top}(\tilde{x}-x)\le 0\quad\forall\,x\in X(y),$$
	hence
	$$(\hat{x}-\hat{x}^p)^{\top}(\hat{x}-\tilde{x})\le 0,\qquad (\tilde{x}-\tilde{x}^p)^{\top}(\tilde{x}-\hat{x})\le 0.$$
	Taking into account that $\tilde{x}_{I_1(y)}=\hat{x}_{I_1(y)} = 0$ and $\hat{x}_{I_0(y)}^p=\tilde{x}_{I_0(y)}^p=x^*-\nabla_{I_0(y)} f(x^*)$, we get
	$$(\hat{x}_{I_0(y)}-\hat{x}^p_{I_0(y)})^{\top}(\hat{x}_{I_0(y)}-\tilde{x}_{I_0(y)})\le 0,\qquad (\tilde{x}_{I_0(y)}-\hat{x}^p_{I_0(y)})^{\top}(\tilde{x}_{I_0(y)}-\hat{x}_{I_0(y)})\le 0,$$
	i.e.,
	$$\|\hat{x}_{I_0(y)}\|^2 - \tilde{x}_{I_0(y)}^{\top}\hat{x}_{I_0(y)} - \hat{x}_{I_0(y)}^{\top}\hat{x}_{I_0(y)}^p + \tilde{x}_{I_0(y)}^{\top}\hat{x}_{I_0(y)}^p\le0,$$
	and
	$$\|\tilde{x}_{I_0(y)}\|^2-\tilde{x}_{I_0(y)}^{\top}\hat{x}_{I_0(y)}-\tilde{x}_{I_0(y)}^{\top}\hat{x}_{I_0(y)}^p+\hat{x}_{I_0(y)}^{\top}\hat{x}_J^p\le 0$$
	Summing up the two inequalities, we get
	$$\|\hat{x}_{I_0(y)}\|^2+\|\tilde{x}_{I_0(y)}\|^2-2\hat{x}_{I_0(y)}^{\top}\tilde{x}_{I_0(y)}\le0,$$
	i.e.,
	$$\|\hat{x}_{I_0(y)}-\tilde{x}_{I_0(y)}\|\le0,$$
	from which we obtain $\tilde{x}_{I_0(y)}=\hat{x}_{I_0(y)}$ and hence $\hat{x} = \tilde{x}.$
\end{proof}

We can hence show that, provided that $\N_\rho$ is employed as neighborhood in \ref{alg:MISO}, with a sufficiently large value of $\rho$, the SNS procedure converges to basic feasible solutions.
\begin{theorem}\label{BF_stat}
	Let $\{(x^k, y^k)\}$ 
	be the sequence of iterates generated by \cref{alg:MISO} equipped with $\N_\rho$ as neighborhood and $\cal A^*$ the set of the accumulation points of the sequence $\{(x^k, y^k)\}_{K_u}$ of unsuccessful iterates. If $\rho\geq2(s-\delta^*)$, in the definition of the set ${\cal N}_\rho(x,y)$, and $\delta^*=\min\{\|x^*\|_0\ |\ (x^*,y^*)\in {\cal A^*}\}$, 
	then given a point $(x^*,y^*)\in {\cal A}^*$, $x^*$ is basic feasible for problem \eqref{prob: cc_prob}.
\end{theorem}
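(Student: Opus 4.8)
The plan is to derive basic feasibility directly from the $\N$-stationarity guaranteed by \cref{theo_convergence}, exploiting the fact that ``pure support-change'' neighbors leave the $x$-component untouched. First I would apply \cref{theo_convergence} to the point $(x^*,y^*)\in{\cal A}^*$: being an accumulation point of the unsuccessful iterates, $x^*$ is $\N$-stationary, so conditions (i)--(iii) of \cref{def: stationary} hold for this $y^*$.

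Next I would check that \cref{prel_lemma} applies at $x^*$. Setting $\delta=\|x^*\|_0$, the definition $\delta^*=\min\{\|x\|_0\mid(x,y)\in{\cal A}^*\}$ gives $\delta\ge\delta^*$, whence the hypothesis $\rho\ge 2(s-\delta^*)$ yields $\rho\ge 2(s-\delta)$, which is exactly the threshold required by \cref{prel_lemma}. Then, for an arbitrary super support set $J$ of $x^*$, I would take $y_J$ as in \cref{def_BF} (so that $I_0(y_J)=J$). Since $|J|=s$ and $I_1(x^*)\subseteq J$, the pair $(x^*,y_J)$ satisfies $e^\top y_J=n-s$ and $I_0(y_J)\supseteq I_1(x^*)$, so it belongs to the set $\bar{\cal N}(x^*)$ of \cref{prel_lemma} and hence to $\N_\rho(x^*,y^*)$.

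At this point condition (iii) of \cref{def: stationary} finishes the argument: because the $x$-part of the neighbor is exactly $x^*$, we trivially have $f(x^*)=f(x^*)$, so the equality branch forces $x^*$ to be a stationary point of the continuous problem over $\X(y_J)$, i.e.\ $x^*=\Pi_{\X(y_J)}[x^*-\nabla f(x^*)]$ in the sense of \cref{stat_1}. Invoking \cref{BF_proj} rewrites this as $x^*=\Pi_{\X(y_J)}(x^*+d)$ with $d_J=-\frac{1}{L}\nabla_J f(x^*)$ and $d_{\bar J}=0$, which is precisely the requirement appearing in \cref{def_BF}. As $J$ ranges over all super supports of $x^*$, this establishes that $x^*$ is basic feasible.

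The step needing the most care is the correspondence between super support sets and the auxiliary set $\bar{\cal N}(x^*)$, together with the bookkeeping of the $\rho$ threshold: one must notice that the elements of $\bar{\cal N}(x^*)$ are precisely the pairs $(x^*,y_J)$ as $J$ runs over the super supports of $x^*$, and that replacing the local sparsity $\delta$ by the global minimum $\delta^*$ in the lower bound on $\rho$ remains admissible because $s-\delta\le s-\delta^*$. Everything else is a straightforward chaining of \cref{theo_convergence}, \cref{prel_lemma}, the equality branch of $\N$-stationarity, \cref{stat_1} and \cref{BF_proj}.
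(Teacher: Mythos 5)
Your proposal is correct and follows essentially the same route as the paper's own proof: pick an arbitrary super support set $J$, build the corresponding $y_J$, use \cref{prel_lemma} to place $(x^*,y_J)$ in $\bar{\cal N}(x^*)\subseteq\N_\rho(x^*,y^*)$, invoke the equality branch of condition (iii) in \cref{def: stationary} via \cref{theo_convergence} to get projected-gradient stationarity over $\X(y_J)$, and finish with \cref{BF_proj}. Your explicit check that $\rho\ge 2(s-\delta^*)\ge 2(s-\|x^*\|_0)$ makes a step the paper leaves implicit, but the argument is otherwise identical.
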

\begin{proof}
	Let $J\subset \{1,\ldots ,n\}$ be any super support set for $x^*$, and consider the vector $\hat y$ such that
	$
	\hat y_j=1\quad \forall j\notin J
	$
	and zero otherwise.
	As $|J|=s$,
	we have $e^\top\hat y=n-s$, and, taking into account that $i\notin J$ implies $x_i^*=0$ and
	$i\in J$ implies $\hat y_i=0$,
	it follows
	$$
	x_i^*\hat y_i=0\quad i=1,\ldots ,n.
	$$
	Then, we have $I_1(x^*)\subseteq I_0(\hat y)$ and
	$(x^*,\hat y) \in \bar {\cal N}(x^*)\subseteq {\cal N}_\rho(x^*, y^*)$, where we used \cref{prel_lemma}.
	By taking into account \cref{theo_convergence}, we finally get that $x^*$ is an $\mathcal{N}_\rho$-stationary point of problem \cref{prob: cc_prob} and that it is also a stationary point of
	\begin{equation*}
	\begin{aligned}
	\min \;& f(x) \\
	\text{s.t. } & x \in \mathcal{X}(\hat y),
	\end{aligned}
	\end{equation*}
	that is 
	$$x^*=\Pi_{\X(\hat y)}(x^*-\nabla f(x^*)).$$
	Then, by \cref{BF_proj}, recalling that $\hat y_i=0$ if and only if $i\in J$, we obtain that $x^*$ is basic feasible.
\end{proof}

\section{Convergence results under constraint qualifications}
\label{sec:convergenceKKT}
In this section, we show that, under constraint qualifications and by choosing suitable neighborhoods, it is possible to state convergence results similar to those considered in important works of the related literature \cite{Burdakov2016,Lu2013}. Here, we assume that $X = \{x\in\R^n\mid g(x)\le 0, \;h(x) = 0\}$, where $h_i$, $i=1,\ldots,p$ are affine functions and $g_i$, $i=1,\ldots,m$, are convex functions.
First we state the following assumption which
implicitly involves constraint qualifications. 
\begin{assumption}\label{KKT}
Given $\bar y \in \Y$ and $\bar x\in \mathcal{X}(\bar y)$, 
we have that $\bar x$ is a stationary point of problem \eqref{subprob_cont} if and only if there exist multipliers $\lambda\in\mathbb{R}^m$, $\mu\in\mathbb{R}^p$ and $\gamma\in\mathbb{R}^n$ such that
	\begin{equation*}
\begin{array}{r}
\displaystyle	\nabla f(\bar x) + \sum_{i=1}^m \lambda_i \nabla g_i(\bar x) +
\sum_{i=1}^p \mu_i \nabla h_i(\bar x) + \sum_{i=1}^n \gamma_i e_i = 0,\\
\lambda_i \ge 0,  \ \lambda_i g_i(\bar x) = 0, \
\forall i = 1, \ldots, m,\\
\gamma_i = 0, \ \forall\; i  \text{ such that } \bar y_i=0.
\end{array}
\end{equation*}
\end{assumption}
The above assumption states that 
$\bar x$ is a stationary point of problem \eqref{subprob_cont} if and only if it is a KKT point of the following problem
\begin{equation*}
	\begin{aligned}
	\min_{x}\; & f(x)  \\
	\text{s.t. } & h_i(x) = 0,	 \quad \forall i=1,\ldots,p,\\
	& g_i(x) \leq 0, \quad \forall i=1,\ldots,m,\\
	& x_i \bar y_i= 0, \quad\; \forall i=1,\ldots,n, \\
	\end{aligned}
\end{equation*}
which can be equivalenty rewritten as follows
\begin{equation*}
\begin{aligned}
\min_{x}\; & f(x)  \\
\text{s.t. } & h_i(x) = 0,	 \quad \forall i=1,\ldots,p,\\
& g_i(x) \leq 0, \quad \forall i=1,\ldots,m,\\
&x_i = 0, 	\qquad\; \forall i: \bar y_i=1.
\end{aligned}
\end{equation*}
\begin{remark}
As shown in \cref{sec:appendix}, \cref{KKT} holds when, e.g.,   the functions $g_i$  are strongly convex with constant $\mu_i>0$, for $i=1,\ldots ,m$, the functions $h_j$, for $j=1,\ldots ,p$ are affine, and 
some Cardinality Constraint-Constraint Qualification (CC-CQ) is satisfied. 
For instance, a standard CC-CQ is
the Cardinality Constraint- Linear Independence Constraint Qualification (CC-LICQ), requiring that  the gradients 
\begin{align*}
	\nabla g_i(\bar x)\qquad & \text{ for all } i: g_i(\bar{x}) = 0\\
	\nabla h_i(\bar{x})\qquad& \text{ for all } i=1,\ldots,p\\
	e_i\qquad&\text{ for all } i:\bar{y}_i = 1
\end{align*}
are linearly independent.
\end{remark}
\par\medskip\noindent
From \cref{theo_convergence} and \cref{KKT} we immediately get the following result.
\begin{theorem}\label{S_stat}
	Let $\{(x^k, y^k)\}$ be the sequence generated by \cref{alg:MISO}. Every accumulation point $(x^*,y^*)$ of the sequence of unsuccessful iterates $\{(x^k, y^k)\}_{K_u}$ is such that there exist multipliers $\lambda\in\mathbb{R}^m$, $\mu\in\mathbb{R}^p$ and $\gamma\in\mathbb{R}^n$ such that
	\begin{equation}
	\label{eq:S-stat}
\begin{array}{r}
\displaystyle	\nabla f(x^*) + \sum_{i=1}^m \lambda_i \nabla g_i(x^*) +
\sum_{i=1}^p \mu_i \nabla h_i(x^*) + \sum_{i=1}^n \gamma_i e_i = 0,\\
\lambda_i \ge 0,  \ \lambda_i g_i(x^*) = 0, \
\forall i = 1, \ldots, m,\\
\gamma_i = 0, \ \forall\; i  \text{ such that } y_i^*=0.
\end{array}
\end{equation}
\end{theorem}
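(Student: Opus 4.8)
The plan is to obtain the statement as an immediate corollary of \cref{theo_convergence} and \cref{KKT}; no fresh convergence argument is needed, which is precisely why the result is announced as following ``immediately.'' The whole proof is a two-step chaining of an already-proven conclusion ($\N$-stationarity at accumulation points) with the equivalence postulated in \cref{KKT} (projected-gradient stationarity $\iff$ KKT).

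First I would invoke \cref{theo_convergence}: for any accumulation point $(x^*, y^*)$ of the unsuccessful subsequence $\{(x^k, y^k)\}_{K_u}$, the point $x^*$ is $\N$-stationary for problem \eqref{prob: cc_prob}. By \cref{def: stationary}, this $\N$-stationarity is certified by the binary vector $y^*$ itself; in particular condition (i) gives feasibility of $(x^*, y^*)$, so $x^* \in \X(y^*)$, and condition (ii) gives that $x^*$ is a stationary point (in the projected-gradient sense of \cref{stat_1}) of the continuous subproblem \eqref{subprob_cont} with $\bar y = y^*$. Having isolated exactly condition (ii), I would then apply the forward (``only if'') direction of \cref{KKT} with the substitution $\bar x = x^*$, $\bar y = y^*$: since $x^* \in \X(y^*)$ and $x^*$ is stationary for \eqref{subprob_cont}, the assumption guarantees multipliers $\lambda \in \R^m$, $\mu \in \R^p$, $\gamma \in \R^n$ satisfying the stationarity equation, the sign and complementarity conditions on $\lambda$, and $\gamma_i = 0$ for every index $i$ with $\bar y_i = 0$. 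These are verbatim the three groups of conditions collected in \eqref{eq:S-stat}, so the claim follows.

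The hard part here is essentially bookkeeping rather than mathematical substance. The one genuine check is that the $y^*$ supplied as the witness of $\N$-stationarity in \cref{theo_convergence} is the same $y^*$ in the accumulation point, so that the index condition $\bar y_i = 0$ of \cref{KKT} aligns with $y^*_i = 0$ in \eqref{eq:S-stat}; this holds by construction. I would also record feasibility $x^* \in \X(y^*)$ explicitly before invoking \cref{KKT}, which follows from closedness of the level set $\mathcal{L}(x^0, y^0)$ as already exploited in part (i) of \cref{theo_convergence}. No constraint-qualification verification is required at this stage either, since CC-CQ has been absorbed into the hypothesis of \cref{KKT}, and the limiting procedure over $k \in K_u$ has already been discharged upstream in \cref{theo_convergence}.
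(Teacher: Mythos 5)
Your argument is correct and is exactly the paper's route: the paper gives no separate proof, stating only that the result follows ``immediately'' from \cref{theo_convergence} and \cref{KKT}, which is precisely the two-step chaining you spell out (condition (ii) of $\N$-stationarity at $(x^*,y^*)$ plus the forward direction of \cref{KKT} applied with $\bar y = y^*$). Your explicit check that the witness $y^*$ from \cref{theo_convergence} is the accumulation point's own $y^*$, so that the index sets align, is a useful detail the paper leaves implicit.
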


\begin{remark}
	Condition \eqref{eq:S-stat} is the $S$-\textit{stationarity} concept introduced in \cite{Burdakov2016}. Basically, the limit points of the sequence $\{(x^k,y^k)\}_{K_u}$ produced by \cref{alg:MISO} are always guaranteed to be $S$-stationary.
	This implies, by the results in \cite{Burdakov2016}, that $x^*$ is also Mordukhovich-stationary for problem \eqref{prob: cc_prob}. In fact, under \cref{KKT}, it is easy to see that $\mathcal{N}$-stationarity is a stronger condition than $M$-stationarity, from points (i)-(ii) of \cref{def: stationary}.
\end{remark}

\par\medskip\noindent
In order to state stronger convergence results, we need to use suitable neighborhoods (e.g., $\mathcal{N}_\rho$ with a sufficiently large value of $\rho$) in the algorithm.

\begin{theorem}\label{LZ_stat}
	Let $\{(x^k, y^k)\}$ 
	be the sequence generated by \cref{alg:MISO} equipped with $\N_\rho$ as neighborhood and $\cal A^*$ the set of the accumulation points of the sequence $\{(x^k, y^k)\}_{K_u}$ of unsuccessful iterates. If $\rho\geq2(s-\delta^*)$, in the definition of the set ${\cal N}_\rho(x,y)$, and $\delta^*=\min\{\|x^*\|_0\ |\ (x^*,y^*)\in {\cal A^*}\}$, 
	then given a point $(x^*,y^*)\in {\cal A}^*$ and for every super support set $J\subset \{1,\ldots ,n\}$, we have that there exist multipliers $\lambda\in\mathbb{R}^m$, $\mu\in\mathbb{R}^p$ and $\gamma\in\mathbb{R}^n$ such that
	\begin{equation}\label{thesis}
\begin{array}{r}
\displaystyle	\nabla f(x^*) + \sum_{i=1}^m \lambda_i \nabla g_i(x^*) +
\sum_{i=1}^p \mu_i \nabla h_i(x^*) + \sum_{i=1}^n \gamma_i e_i = 0,\\
\lambda_i \ge 0,  \ \lambda_i g_i(x^*) = 0, \
\forall i = 1, \ldots, m,\\
\gamma_i = 0, \ \forall\; i  \in J.
\end{array}
\end{equation}
\end{theorem}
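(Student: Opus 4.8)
The plan is to mirror the argument used in \cref{BF_stat}, but to convert the stationarity conclusion into multiplier form via \cref{KKT} rather than into the projection characterization of basic feasibility. The whole proof is driven by the observation that, for a fixed super support $J$, the neighbor of $(x^*,y^*)$ obtained by switching the active subspace to $J$ \emph{while leaving the continuous part untouched} has the same objective value, and must therefore be stationary by part (iii) of $\mathcal{N}$-stationarity.

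First I would fix an arbitrary super support set $J$ for $x^*$ and build the associated binary vector $\hat{y}$, defined by $\hat{y}_j = 1$ for $j \notin J$ and $\hat{y}_j = 0$ for $j \in J$, exactly as in the proof of \cref{BF_stat}. Since $|J| = s$ we obtain $e^\top \hat{y} = n - s$, and since $I_1(x^*) \subseteq J = I_0(\hat{y})$ the complementarity $x_i^* \hat{y}_i = 0$ holds for every $i$, so $(x^*, \hat{y}) \in \bar{\cal N}(x^*)$. Next I would verify the neighborhood membership: because $\delta^* = \min\{\|x^*\|_0 \mid (x^*, y^*) \in {\cal A}^*\}$, any accumulation point satisfies $\|x^*\|_0 \ge \delta^*$, whence $2(s - \|x^*\|_0) \le 2(s - \delta^*) \le \rho$. \cref{prel_lemma} then yields $(x^*, \hat{y}) \in \bar{\cal N}(x^*) \subseteq \mathcal{N}_\rho(x^*, y^*)$.

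The crucial point is that this particular neighbor keeps the continuous component unchanged: by construction $\hat{x} = x^*$, so $f(\hat{x}) = f(x^*)$. Invoking \cref{theo_convergence}, $x^*$ is $\mathcal{N}_\rho$-stationary, so condition (iii) of \cref{def: stationary} applies in the equality case and forces $x^*$ to be a stationary point of the continuous problem $\min f(x)$ subject to $x \in \mathcal{X}(\hat{y})$. Finally, applying \cref{KKT} to this stationarity statement produces multipliers $\lambda \in \mathbb{R}^m$, $\mu \in \mathbb{R}^p$, $\gamma \in \mathbb{R}^n$ satisfying the KKT system, where the condition on $\gamma$ reads $\gamma_i = 0$ for all $i$ with $\hat{y}_i = 0$, i.e.\ for all $i \in I_0(\hat{y}) = J$. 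Since $J$ was an arbitrary super support set, this is precisely \eqref{thesis}.

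The main obstacle is essentially bookkeeping rather than analysis: one must make sure that the single neighbor $(x^*, \hat{y})$ with unchanged continuous part genuinely lies in $\mathcal{N}_\rho(x^*, y^*)$ for \emph{every} super support $J$, which is exactly where the hypothesis $\rho \ge 2(s - \delta^*)$ is needed and where \cref{prel_lemma} does the work, and then carefully match the index set $\{i \mid \hat{y}_i = 0\}$ appearing in \cref{KKT} with the prescribed super support $J$. Once these identifications are in place, the multiplier characterization follows directly from the earlier results with no further computation.
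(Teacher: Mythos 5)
Your proposal is correct and follows essentially the same route as the paper's own proof: construct $\hat y$ from the super support $J$, use \cref{prel_lemma} to place $(x^*,\hat y)$ in $\bar{\cal N}(x^*)\subseteq\mathcal{N}_\rho(x^*,y^*)$, invoke \cref{theo_convergence} to get stationarity on $\mathcal{X}(\hat y)$, and conclude via \cref{KKT}. Your explicit remark that the equality case $f(\hat x)=f(x^*)$ of condition (iii) is what triggers the stationarity conclusion is a useful clarification the paper leaves implicit, but it is not a different argument.
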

\begin{proof}
Let $J\subset \{1,\ldots ,n\}$ be any super support set for $x^*$, and consider the vector $\hat y$ such that
$
\hat y_j=1\quad \forall j\notin J
$
and zero otherwise.
As $|J|=s$,
we have $e^\top\hat y=n-s$, and, taking into account that $i\notin J$ implies $x_i^*=0$ and
$i\in J$ implies $\hat y_i=0$,
it follows
$$
x_i^*\hat y_i=0\quad i=1,\ldots ,n.
$$
Then, we have $I_1(x^*)\subseteq I_0(\hat y)$ and
$(x^*,\hat y) \in \bar {\cal N}(x^*)\subseteq {\cal N}_\rho(x^*, y^*)$, where we used \cref{prel_lemma}.
By taking into account \cref{theo_convergence}, we finally get that $x^*$ is an $\mathcal{N}_\rho$-stationary point of problem \cref{prob: cc_prob} and that it is also a stationary point of
\begin{equation*}
			\begin{aligned}
			\min \;& f(x) \\
			\text{s.t. } & x \in \mathcal{X}(\hat y).
			\end{aligned}
		\end{equation*}
Then, by \cref{KKT}, recalling that $\hat y_i=0$ if and only if $i\in J$, we obtain that \eqref{thesis} holds.
\end{proof}

\begin{remark}
Condition \eqref{thesis} is the necessary optimality condition first defined in \cite{Lu2013}. It is interesting to note that the Penalty Decomposition algorithm proposed in the referenced work in fact is not guaranteed to converge to a point satisfying such conditions, that are guaranteed to hold only if the limit point has full support. In the general case, the PD method generates points satisfying \eqref{thesis} for at least one super support set. Our SNS algorithm would have the same exact convergence results if we used the neighborhood
$$\mathcal{N}(x^k,y^k)=\{(x,y)\mid x=x^k,\;e^{\top}y=n-s,\;y_ix_i^k=0\,\forall\,i\}.$$
The above neighborhood basically checks all the super support sets at the current iterate $x^k$, but it does not satisfy the continuity \cref{asmp:discrete_neighborhood}, hence failing to guarantee that condition \eqref{thesis} is satisfied by all super support sets at the limit point.
\end{remark}

\section{Numerical Experiments}
\label{sec:experiments}
From a computational point of view, we are particularly interested in studying two relevant aspects. Specifically, here we want to: 
\begin{itemize}
    \item analyze the benefits and the costs of increasing the size of the neighborhood;
    \item assess the performance of the proposed approach, compared to the the Greedy Sparse-Simplex (GSS) method proposed in \cite{Beck2013} and the Penalty Decomposition (PD) approach \cite{Lu2013}.
\end{itemize}

To these aims, we considered the problem of sparse logistic regression, where the objective function
is continuously differentiable and convex, but the solution of the problem for a fixed support set requires the adoption of an iterative method. Note that we preferred to consider a problem without other constraints in addition to the sparsity one, in order to simplify the analysis of the behavior of the proposed algorithm.

The problem of \textit{sparse logistic regression}  \cite{hastie2009elements} has important
applications, for instance, in machine learning \cite{bach2012optimization,weston2003use}.
Given a dataset having $N$ samples $\{z^1,\ldots ,z^N\}$, with $n$ features and $N$ corresponding labels
$\{t_1,\ldots, t_N\}$
belonging to 
$\{-1,1\}$,
the problem of sparse maximum likelihood estimation of a logistic regression model can be formulated as follows
\begin{equation}
    \label{eq:sparse_logistic}
    \begin{aligned}
        \min_w\;&L(w) =\sum_{i=1}^{N} \log\left(1+\exp\left(-t_i(w^{\top}z^i)\right)\right) \\\text{ s.t. }&\|w\|_0\le s.
    \end{aligned}
\end{equation}

The benchmark for this experiment is made up of problems of the form \eqref{eq:sparse_logistic}, obtained as described hereafter. We employed 6 binary classification datasets, listed in \cref{tab:logistic_datasets}. All the datasets are from the UCI Machine Learning Repository \cite{Dua2019}. 
For each dataset, we removed data points with missing variables; moreover, we one-hot encoded the categorical variables and standardized the other ones to zero mean and unit standard deviation.
For every dataset, we chose different values of $s$, as specified later in this section. 

\begin{table}[!htb]
	\centering
	\caption{List of datasets used for experiments on sparse logistic regression.}
	\begin{tabular}{lccc}
		\hline
		\textbf{Dataset} & $\boldsymbol{N}$ & $\boldsymbol{n}$ & \textbf{Abbreviation}\\
		\hline
		Heart (Statlog) & 270 & 25 & \texttt{heart} \\
		Breast Cancer Wisconsin (Prognostic) & 194 & 33 & \texttt{breast} \\
		QSAR Biodegradation & 1055 & 41 & \texttt{biodeg} \\
		SPECTF Heart & 267 & 44 & \texttt{spectf} \\
		Spambase & 4601 & 57 & \texttt{spam} \\
		Adult a2a & 2265 & 123 & \texttt{a2a} \\
		\hline
	\end{tabular}
	\label{tab:logistic_datasets}
\end{table}

\subsection{Implementation details}
Algorithms SNS, PD and GSS have been implemented in Python 3.7, mainly exploiting libraries \texttt{numpy} and \texttt{scipy}.
The convex subproblems of both PD and GSS have been solved up to global optimality by using the L-BFGS algorithm (in the implementation from \cite{liu1989limited}, provided by \texttt{scipy}). We also employed L-BFGS for the local optimization steps in SNS. 
All algorithms start from the feasible initial point $x^0=0\in\mathbb{R}^n$. For the PD algorithm, we set the starting penalty parameter to 1 and its growth rate to 1.05. The algorithm stops when $\|x^k-y^k\|<0.0001$, as suggested in \cite{Lu2013}. AS for the GSS, we stop the algorithm as soon as $\|x^{k+1}-x^k\|\le0.0001$.

Concerning our proposed \cref{alg:MISO}, the parameters have been set as follows:
\begin{itemize}
\item $\xi = 10^{3}$,
\item $\theta = 0.5$,
\item $\eta_0 = 10^{-5}$.
\end{itemize}
For what concerns $\mu_0$ and $\delta$, we actually keep the value of $\mu$ fixed to $10^{-6}$. We again employ the stopping criterion $\|x^{k+1}-x^k\|\le0.0001$.

For all the algorithms, we have also set a time limit of $10^4$ seconds. All the experiments have been carried out on an Intel(R) Xeon E5-2430 v2 @2.50GHz CPU machine with 6 physical cores (12 threads) and 16 GB RAM.

As benchmark for our experiments, we considered 18 problems, obtained from the 6 datasets in \cref{tab:logistic_datasets} and setting $s$ to 3, 5 and 8 in \eqref{eq:sparse_logistic}. For SNS and GSS we consider the computational time employed to find the best solution. We take into account four versions of \cref{alg:MISO}, with neighborhood radius $\rho\in\{1,2,3,4\}$.

In \cref{fig:pp} the performance profiles \cite{Dolan2002} w.r.t.\ the objective function values and the runtimes (intended as the time to find the best solution) attained by the different algorithms are shown. We do not report the runtime profile of SNS(1) since it is much faster than all the other methods and thus would dominate the plot, making it poorly informative. We can however note that unfortunately its speed is outweighed by the very poor quality of the solutions.  We can observe that increasing the size of the neighborhood consistently leads to higher quality solutions, even though the computational cost grows. We can see that SNS (with a sufficiently large neighborhood) has better performances than the other algorithms known from the literature; in particular, while the neighborhood radius $\rho=1$ only allows to perform forward selection, with poor outcomes, $\rho\ge 2$ makes swap operations possible, with a significant impact on the exploration capabilities. The GSS has worse quality performance than SNS(2), which is reasonable, since its move set is actually smaller and optimization is always carried out w.r.t.\ a single variable and not the entire active set. However, it proved to also be slower than the SNS, mostly because of two reasons: it always tries all feasible moves, not necessarily accepting the first one that provides an objective decrease, and it requires many more iterations to converge, since it considers one variable at a time. Finally, the PD method appears not to be competitive from both points of view: it is slow at converging to a feasible point and it has substantially no global optimization features that could guide to globally good solutions.

\begin{figure}[htbp]
	\centering
	\subfloat[objective value]{\includegraphics[width=0.48\textwidth]{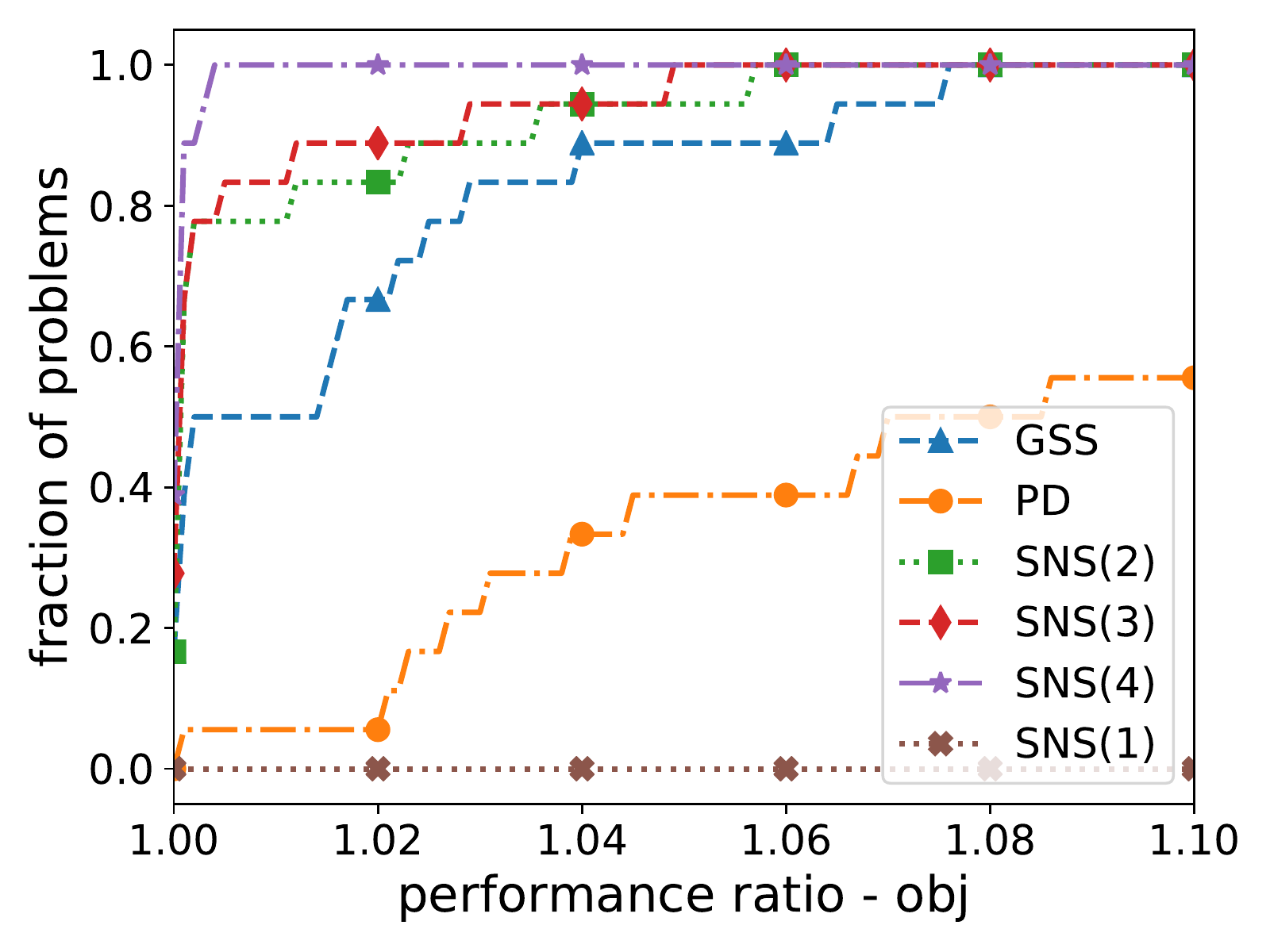}}
	\subfloat[time]{\includegraphics[width=0.48\textwidth]{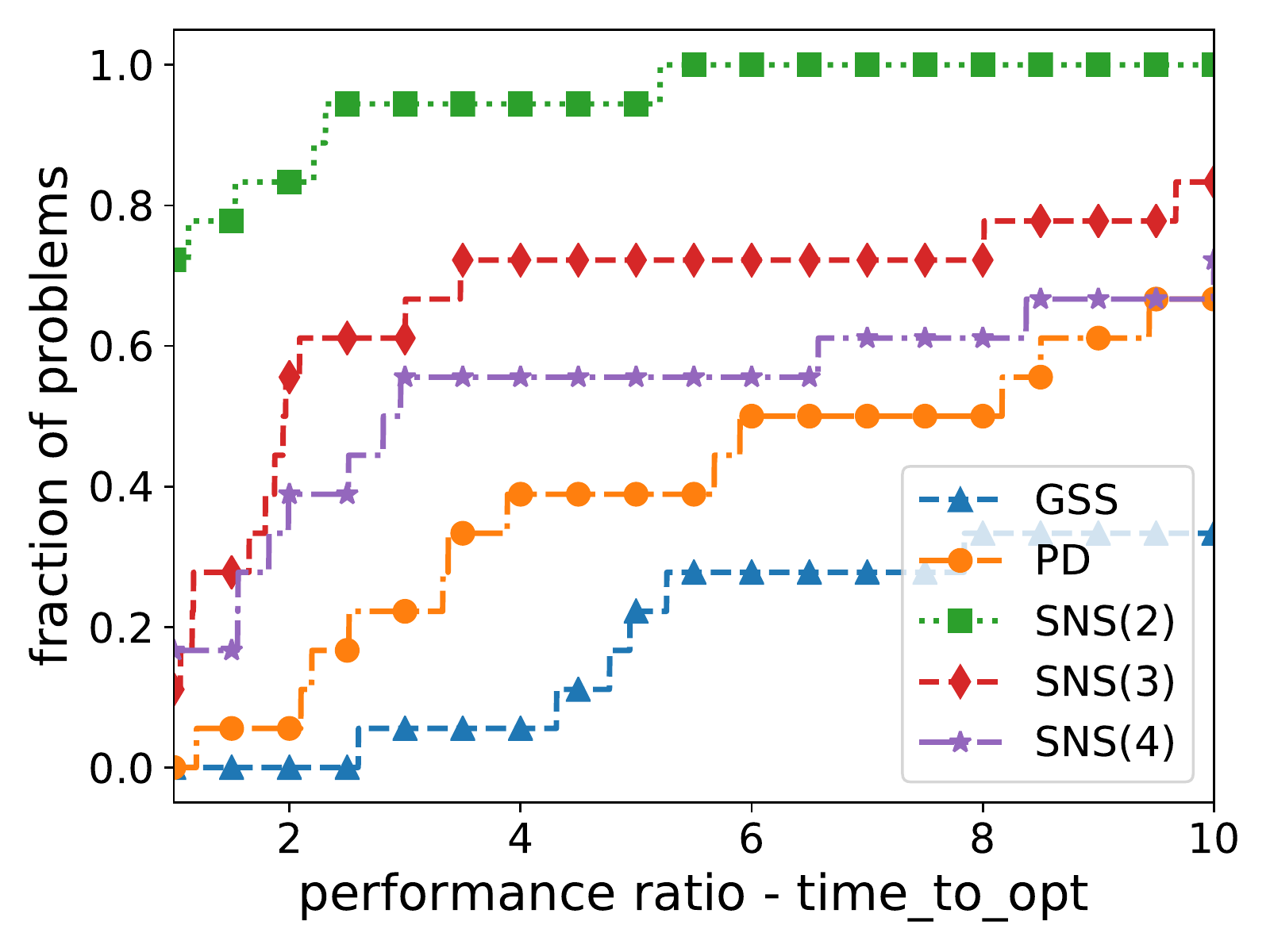}}
	\caption{Performance profiles for the considered algorithms on 18 sparse logistic regression problems.}
	\label{fig:pp}
\end{figure}

It is interesting to remark how considering larger neighborhoods appears to be particularly useful in problems where the sparsity constraint is less strict and thus combinatorially more challenging. As an example, we show the runtime-objective tradeoff for the \texttt{breast}, \texttt{spam} and \texttt{a2a} problems for $s=3$ and $s=8$ in \cref{fig:tradeoffs}. We can observe that for $s=3$, SNS finds good, similar solutions for either $\rho=2,3$ or $4$, with a similar computational cost. On the other hand, as $s$ grows to 8, using $\rho=4$ allows to significantly improve the quality of the solution without a significant increase in terms of runtime. 

\begin{figure}[htbp]
	\centering
	\subfloat[\texttt{breast} - $s=3$]{\includegraphics[width=0.48\textwidth]{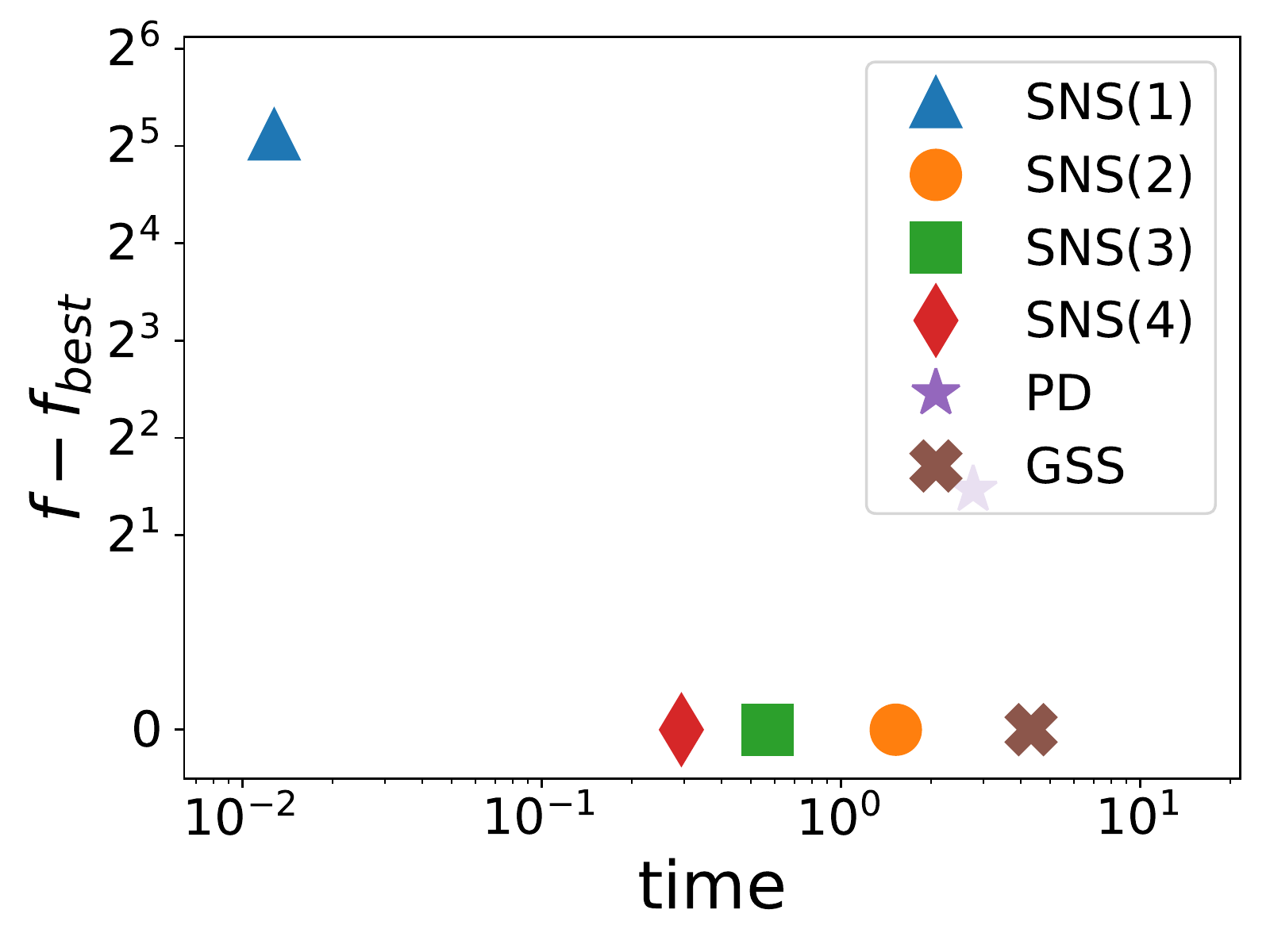}}
	\subfloat[\texttt{breast} - $s=8$]{\includegraphics[width=0.48\textwidth]{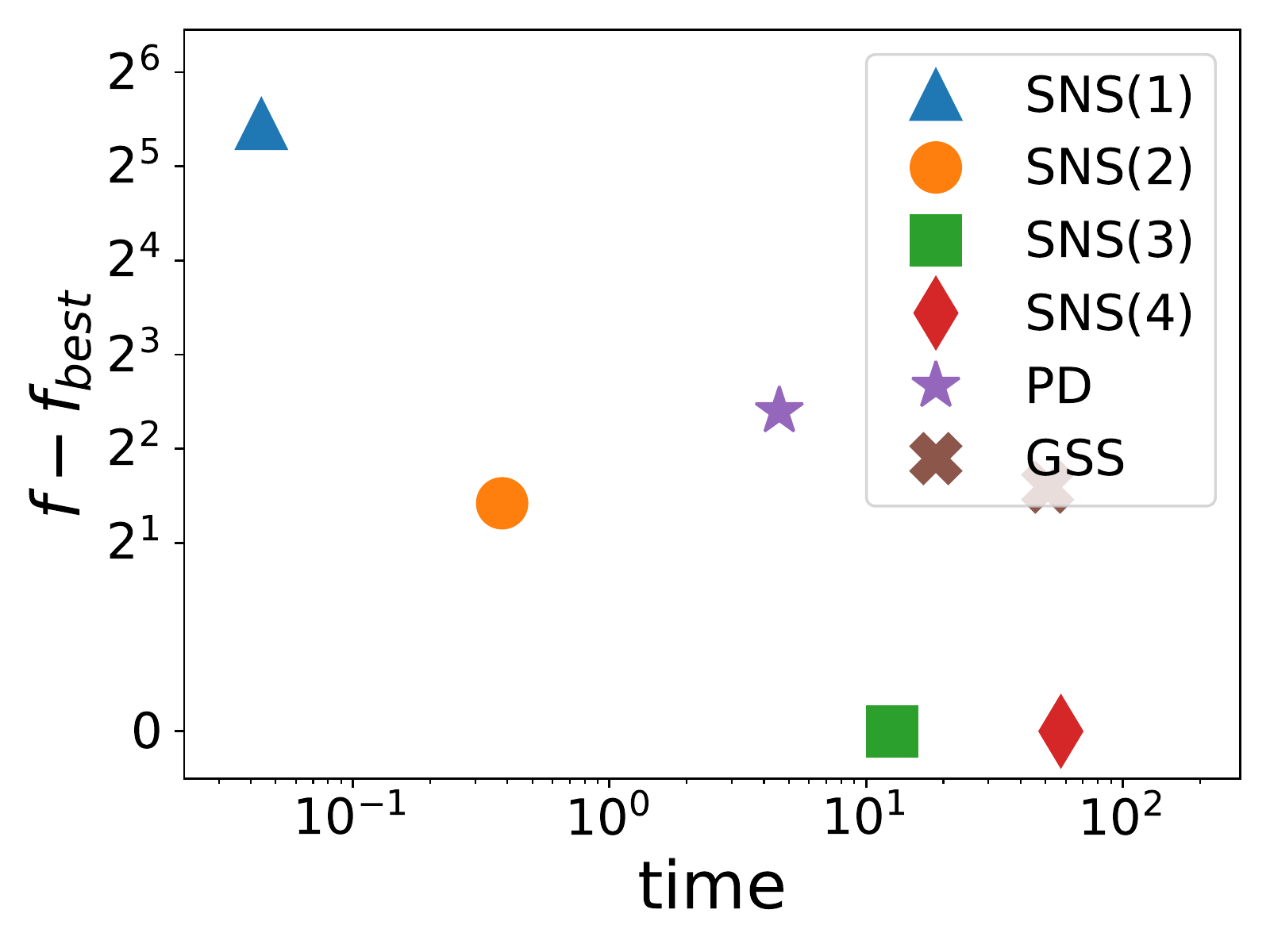}}
	
	\subfloat[\texttt{spam} - $s=3$]{\includegraphics[width=0.48\textwidth]{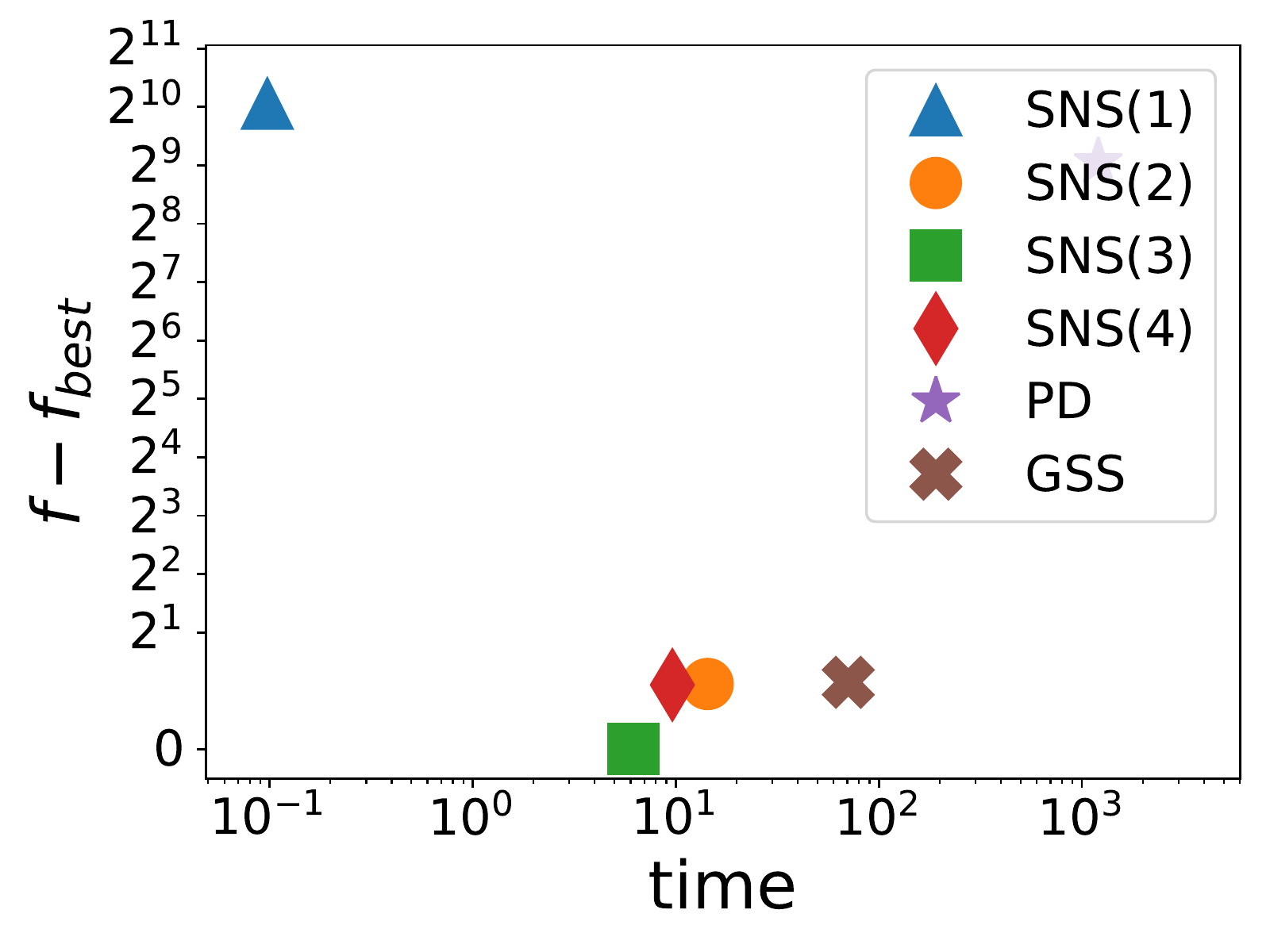}}
	\subfloat[\texttt{spam} - $s=8$]{\includegraphics[width=0.48\textwidth]{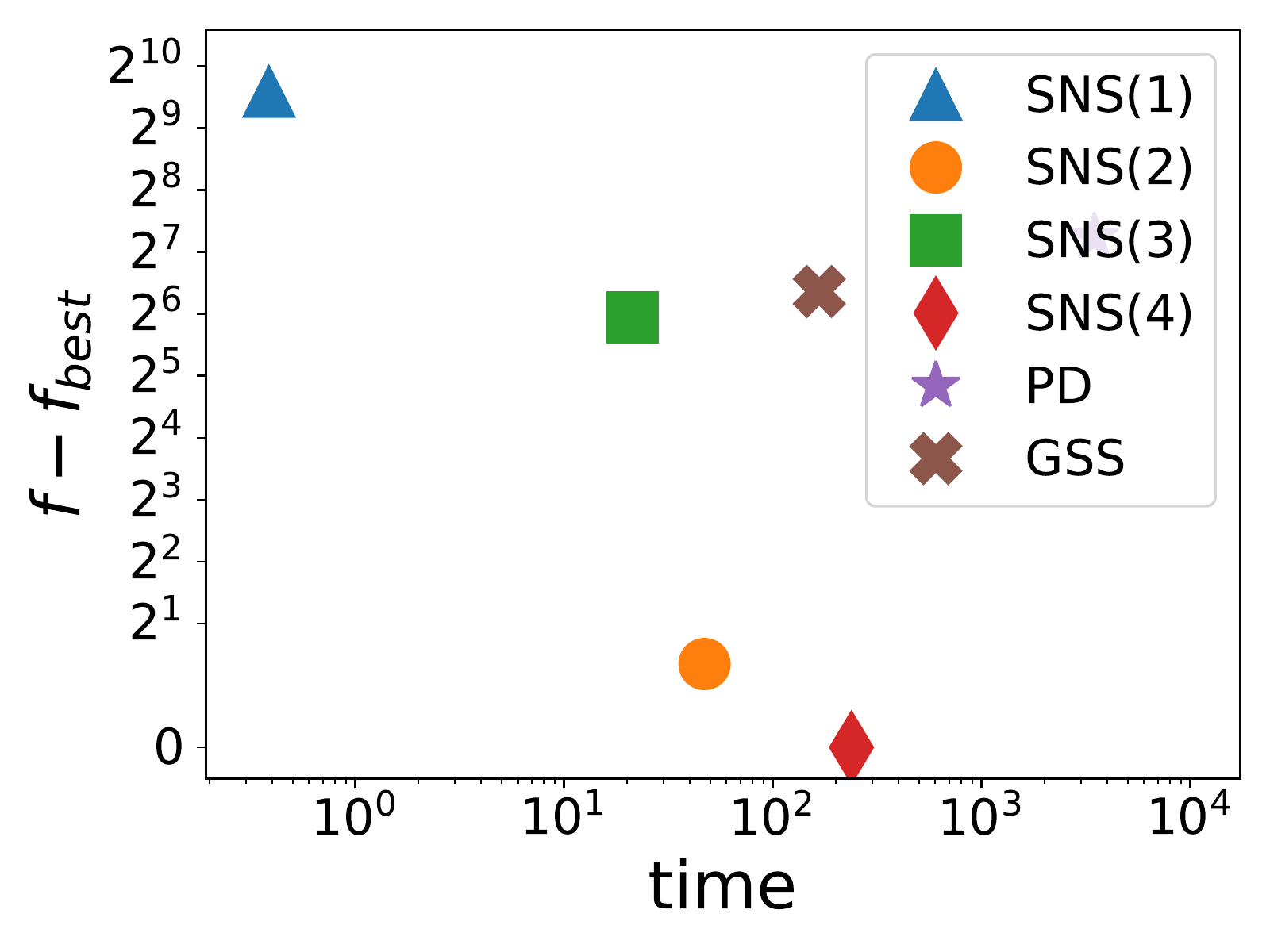}}
	
	\subfloat[\texttt{a2a} - $s=3$]{\includegraphics[width=0.48\textwidth]{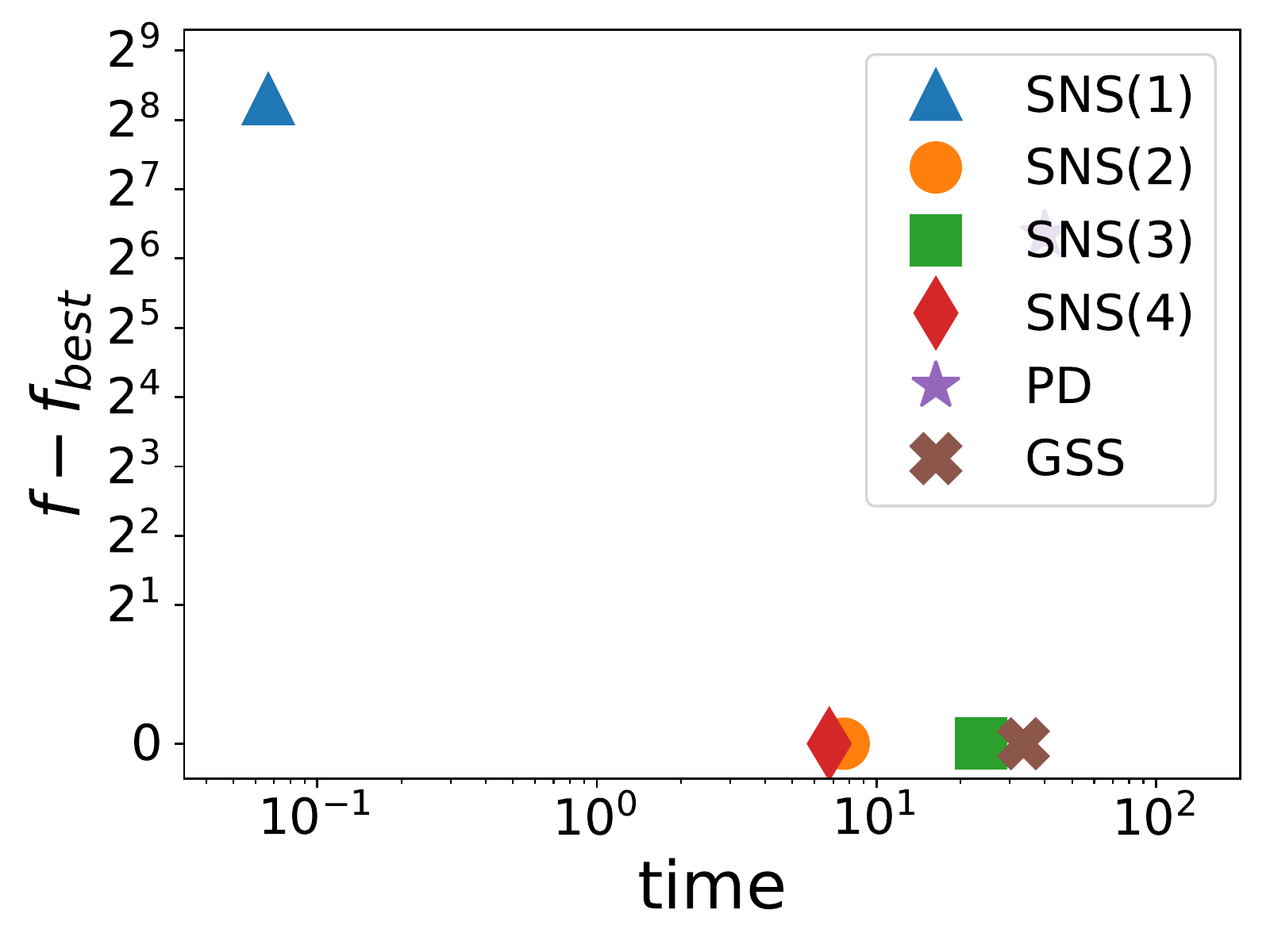}}
	\subfloat[\texttt{a2a} - $s=8$]{\includegraphics[width=0.48\textwidth]{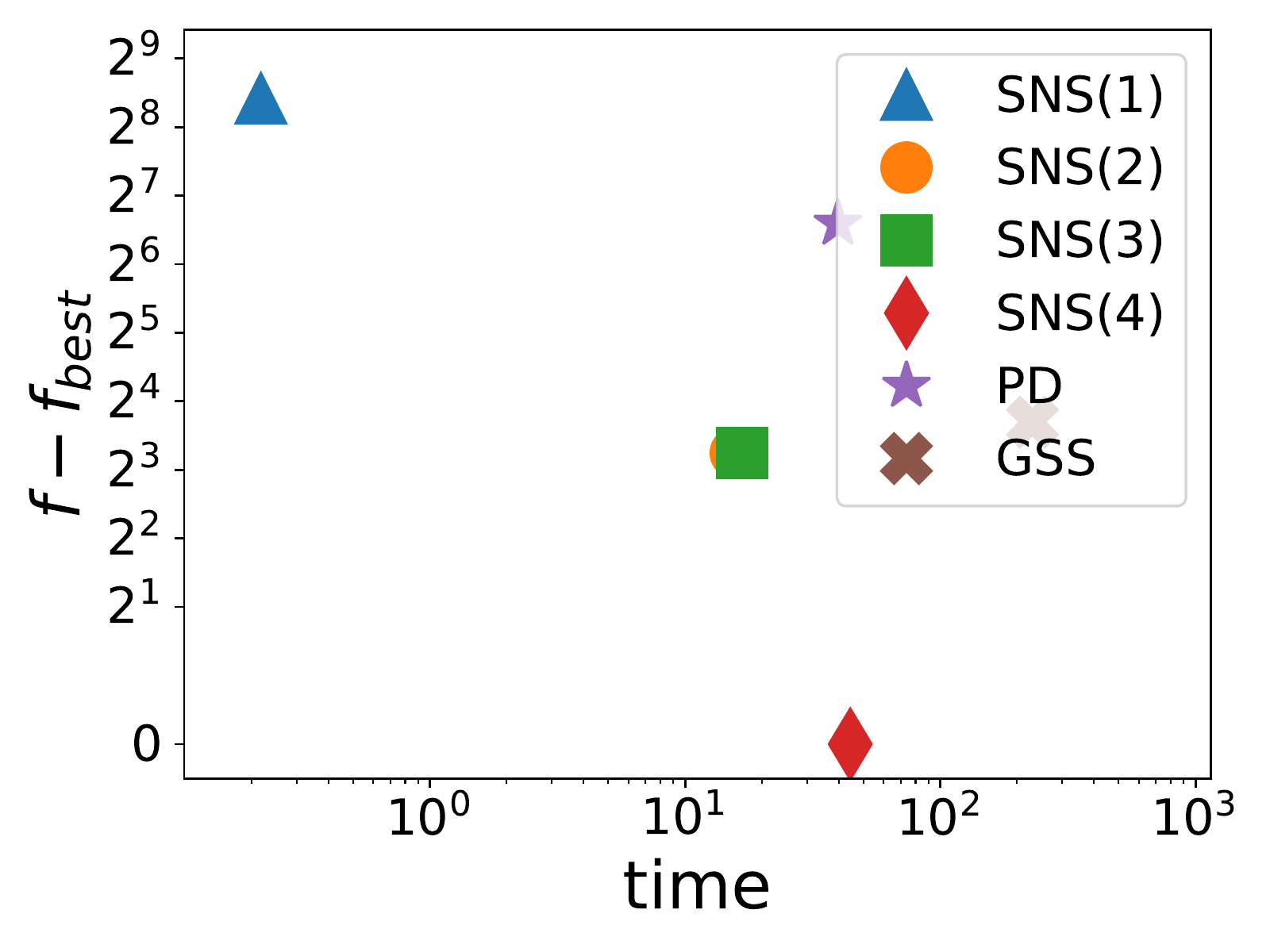}}
	\caption{Quality/cost trade-off for the algorithms on sparse logistic regression problems from datasets \texttt{breast}, \texttt{spam} and \texttt{a2a}.}
	\label{fig:tradeoffs}
\end{figure}

\FloatBarrier

\section{Conclusions}
\label{sec:conclusions}
In this paper we have analyzed sparsity constrained optimization problems. For this class of problems, we have defined a necessary optimality condition, namely, $\N$-stationarity, exploiting the concept of discrete neighborhood associated with a well-known mixed integer equivalent reformulation, that allows to take into account potentially advantageous changes on the set of active variables.

We have afterwards proposed an algorithmic framework to tackle the  family of problems under analysis. Our SNS method alternates continuous local search steps and neighborhood exploration steps; the algorithm is then proved to produce a sequence of iterates whose cluster points are $\N$-stationary. Moreover, we proved that, by suitably employing a tailored neighborhood, the limit points also satisfy other optimality conditions from the literature, based on both gradient projection and Lagrange multipliers, thus providing stronger optimality guarantees than other state-of-the-art approaches.

Finally, we studied the features and the benefits of our proposed procedure from a computational perspective. Specifically, we compared the performance of the SNS as the size of the neighborhood increases, observing that using wider neighborhoods consistently provides higher quality solutions with a reasonable increase of the computational cost, especially when the required cardinality  is not that small. Moreover, when comparing SNS with the Penalty Decomposition method and the Greedy Sparse-Simplex method, we observed that our method has higher exploration capability, thus getting a nice match between theory and practice, and  it is affordable in terms of computational cost, being even faster than the other considered methods.

\appendix
\section{On the relationship between stationarity conditions and KKT conditions}
\label{sec:appendix}
Consider the continuous optimization problem
\begin{equation}
\label{eq:convex_prob}
\begin{aligned}
\min_{x}\;& f(x)\\\text{s.t. }& x\in X,
\end{aligned}
\end{equation}
where $X=\{x\in\mathbb{R}^n\mid h(x) = 0,\;g(x)\le 0\}$ is a convex set ($h_i$, $i=1,\ldots,p$ are affine functions, $g_i$, $i=1,\ldots,m$, are convex functions). We assume $f$ and $g$ to be continuously differentiable; $h$ is differentiable, being affine.

\begin{definition}
	A point $x^*\in X$ is a \textit{stationary point} for problem \eqref{eq:convex_prob} if, for any direction $d$ feasible at $x^*$, we have
	\begin{equation*}
	\nabla f(x^*)^{\top}d\ge 0. 
	\end{equation*}
\end{definition}
It can be shown that a point $x^*$ is stationary for problem \eqref{eq:convex_prob} if and only if
\begin{equation}
\label{eq:proj-stat}
x^* = \Pi_X[x^*-\nabla f(x^*)],
\end{equation}
where $\Pi_X$ denotes the orthogonal projection operator.
Stationarity is a necessary condition of optimality for problem \eqref{eq:convex_prob}.
It is possible to show that a point satisfying the KKT conditions is always a stationary point.
Viceversa is true by stronger assumptions on the set of feasible directions.

\begin{proposition}
	\label{prop:appendix}
	Let $x^*\in X$ satisfy KKT conditions for problem \eqref{eq:convex_prob}. Then, $x^*$ is stationary for problem \eqref{eq:convex_prob}.
\end{proposition}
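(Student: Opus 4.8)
The plan is to take an arbitrary direction $d$ feasible at $x^*$ and show directly that $\nabla f(x^*)^\top d\ge 0$, by contracting the KKT stationarity identity with $d$ and using sign information on the directional derivatives of the constraints along $d$. Recall that $d$ being feasible at $x^*$ means there is some $\bar\alpha>0$ with $x^*+\alpha d\in X$ for every $\alpha\in(0,\bar\alpha]$, so that $g_i(x^*+\alpha d)\le 0$ and $h_i(x^*+\alpha d)=0$ for all small $\alpha>0$.

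First I would record the sign of each constraint gradient against $d$. For an active inequality index, i.e. $i$ with $g_i(x^*)=0$, feasibility of $x^*+\alpha d$ gives $g_i(x^*+\alpha d)\le 0$; dividing by $\alpha$ and letting $\alpha\to 0^+$ yields
$$\nabla g_i(x^*)^\top d=\lim_{\alpha\to 0^+}\frac{g_i(x^*+\alpha d)-g_i(x^*)}{\alpha}\le 0.$$
For an equality constraint, writing the affine $h_i$ as $h_i(x)=a_i^\top x-b_i$, the relations $h_i(x^*)=0$ and $h_i(x^*+\alpha d)=0$ force $\alpha\,a_i^\top d=0$, hence $\nabla h_i(x^*)^\top d=a_i^\top d=0$.

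Next I would take the inner product of the KKT identity
$$\nabla f(x^*)+\sum_{i=1}^m\lambda_i\nabla g_i(x^*)+\sum_{i=1}^p\mu_i\nabla h_i(x^*)=0$$
with $d$, obtaining $\nabla f(x^*)^\top d=-\sum_i\lambda_i\nabla g_i(x^*)^\top d-\sum_i\mu_i\nabla h_i(x^*)^\top d$. The equality terms vanish by the computation above. Complementary slackness $\lambda_i g_i(x^*)=0$ kills every inactive constraint, since $g_i(x^*)<0$ forces $\lambda_i=0$, so only active indices survive; for those, $\lambda_i\ge 0$ and $\nabla g_i(x^*)^\top d\le 0$, whence $-\lambda_i\nabla g_i(x^*)^\top d\ge 0$. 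Summing gives $\nabla f(x^*)^\top d\ge 0$, and since $d$ was an arbitrary feasible direction this is exactly the stationarity condition \eqref{eq:proj-stat}.

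The only delicate point is the passage from feasibility of the perturbed points to the sign of the directional derivative of the active inequality constraints; it rests on $g_i$ being differentiable, so that the one-sided limit coincides with $\nabla g_i(x^*)^\top d$, and is otherwise routine. I would emphasize that \emph{convexity} of the $g_i$ is not needed for this direction of the implication—it is the converse (stationary $\Rightarrow$ KKT) that requires a constraint qualification to guarantee a rich enough cone of feasible directions—so the argument here is a short, direct verification.
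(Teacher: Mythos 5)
Your proposal is correct and follows essentially the same route as the paper's proof: contract the KKT identity with an arbitrary feasible direction $d$, use complementary slackness to discard inactive constraints, and use $\nabla h_i(x^*)^\top d=0$ together with $\nabla g_i(x^*)^\top d\le 0$ for active $i$ to conclude $\nabla f(x^*)^\top d\ge 0$. The only cosmetic difference is that you derive the sign relations on the constraint gradients directly from one-sided difference quotients along $d$, whereas the paper simply invokes convexity of $X$ to assert them.
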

\begin{proof}
	Assume $x^*$ satisfies KKT conditions with multipliers $\lambda$ and $\mu$. Let $d$ be any feasible direction at $x^*$. Since $X$ is convex, we know that:
	\begin{gather}
		\label{eq:app_h}
		\nabla h_i(x^*)^{\top}d=0\quad \forall i=1,\ldots,p,\\
		\label{eq:app_g}
		\nabla g_i(x^*)^{\top}d\le 0 \quad \forall i:g_i(x^*)=0.
	\end{gather}
	Moreover, from KKT conditions we know that 
	\begin{gather}
		\label{eq:app_compl}
		\lambda_i=0 \quad \forall\, i:g_i(x^*)<0.
	\end{gather}		

We know that
$$\nabla f(x^*) + \sum_{i=1}^{m}\lambda_i\nabla g_i(x^*) + \sum_{i=1}^{m}\mu_i\nabla h_i(x^*)=0,$$
hence
$$\left(\nabla f(x^*) + \sum_{i=1}^{m}\lambda_i\nabla g_i(x^*) + \sum_{i=1}^{p}\mu_i\nabla h_i(x^*)=0\right)^{\top}d=0,$$
and then
$$\nabla f(x^*)^{\top}d + \sum_{i=1}^{m}\lambda_i\nabla g_i(x^*)^{\top}d + \sum_{i=1}^{m}\mu_i\nabla h_i(x^*)^{\top}d=0.$$
From equations \eqref{eq:app_h} and \eqref{eq:app_compl}, we get
$$\nabla f(x^*)^{\top}d + \sum_{i:g_i(x^*)=0}^{}\lambda_i\nabla g_i(x^*)^{\top}d=0,$$
thus, recalling \eqref{eq:app_g} and $\lambda\ge0$,
$$\nabla f(x^*)^{\top}d =- \sum_{i:g_i(x^*)=0}^{}\lambda_i\nabla g_i(x^*)^{\top}d\ge 0.$$
Since $d$ is an arbitrary feasible direction, we get the thesis.
\end{proof}
\begin{proposition}
	\label{prop2:appendix}
	Let $x^*\in X$ be a stationary point for problem \eqref{eq:convex_prob}.
	Assume that one of the following conditions holds:
	\begin{itemize}
 \item [(i)] the set of feasible direction $D(x^*)$ is such that
$$
  D(x^*)=\{d\in \mathbb{R}^n:\nabla g_i(x^*)^{\top}d\le 0, \ i\in I(x^*), \nabla h_i(x^*)^{\top}d=0, i=1,\ldots ,p\}
$$
 \item [(ii)] the set of feasible direction $D(x^*)$ is such that
 \begin{gather*}
 	D(x^*)=\{d\in \mathbb{R}^n\mid\nabla g_i(x^*)^{\top}d<0, \ i: g_i(x^*) = 0,\ \nabla h_j(x^*)^{\top}d=0, j=1,\ldots ,p\},
 \end{gather*}
and a constraint qualification holds.
\end{itemize}
Then, $x^*$ is a KKT point.
\end{proposition}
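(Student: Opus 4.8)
The plan is to reduce both cases to a theorem of the alternative (Farkas' lemma) applied on the closed linearized cone at $x^*$. Throughout, write $I(x^*)=\{i\mid g_i(x^*)=0\}$ for the active index set. Recall that stationarity of $x^*$ means exactly that $\nabla f(x^*)^\top d\ge 0$ for every feasible direction $d\in D(x^*)$; equivalently, the system $d\in D(x^*)$, $-\nabla f(x^*)^\top d>0$ is infeasible. The target is to produce multipliers $\lambda\ge 0$ and $\mu$ (free, since the $h_i$ are equality constraints) with $\nabla f(x^*)+\sum_i\lambda_i\nabla g_i(x^*)+\sum_i\mu_i\nabla h_i(x^*)=0$ and $\lambda_i=0$ for $i\notin I(x^*)$, the latter giving complementary slackness automatically.

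Under assumption (i), the set $D(x^*)$ is exactly the closed polyhedral cone $\{d\mid \nabla g_i(x^*)^\top d\le 0,\ i\in I(x^*),\ \nabla h_i(x^*)^\top d=0\}$. Hence stationarity says that no $d$ satisfies simultaneously $\nabla g_i(x^*)^\top d\le 0$ (for $i\in I(x^*)$), $\nabla h_i(x^*)^\top d=0$, and $-\nabla f(x^*)^\top d>0$. Encoding each equality $\nabla h_i(x^*)^\top d=0$ as the pair of inequalities $\nabla h_i(x^*)^\top d\le 0$ and $-\nabla h_i(x^*)^\top d\le 0$, this infeasibility is precisely the hypothesis of Farkas' lemma, whose conclusion yields nonnegative multipliers for the $\nabla g_i(x^*)$ and for the split $\pm\nabla h_i(x^*)$; recombining the latter pair into free-sign multipliers $\mu_i$ gives $-\nabla f(x^*)=\sum_{i\in I(x^*)}\lambda_i\nabla g_i(x^*)+\sum_i\mu_i\nabla h_i(x^*)$ with $\lambda\ge 0$, i.e.\ the stationarity equation of the KKT system. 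Setting $\lambda_i=0$ for $i\notin I(x^*)$ restores the full index range and supplies complementary slackness.

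For assumption (ii), the idea is to first extend stationarity from the open cone to its closure and then invoke the argument of case (i). The constraint qualification provides a Slater-type direction $\hat d$ with $\nabla g_i(x^*)^\top\hat d<0$ for all $i\in I(x^*)$ and $\nabla h_j(x^*)^\top\hat d=0$. Given any $d_0$ in the closed linearized cone (so $\nabla g_i(x^*)^\top d_0\le 0$ for active $i$ and $\nabla h_j(x^*)^\top d_0=0$), the perturbed direction $d_0+\varepsilon\hat d$ satisfies the strict inequalities for every $\varepsilon>0$ and therefore lies in $D(x^*)$; stationarity then gives $\nabla f(x^*)^\top(d_0+\varepsilon\hat d)\ge 0$, and letting $\varepsilon\to 0^+$ yields $\nabla f(x^*)^\top d_0\ge 0$. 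Thus $\nabla f(x^*)^\top d\ge 0$ holds on the whole closed linearized cone, which is exactly the situation of case (i), and the same Farkas argument produces the KKT multipliers.

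I expect the main obstacle to be the bookkeeping in the Farkas step, namely correctly representing the equality constraints as two opposite inequalities so as to obtain free-sign multipliers $\mu_i$, and checking that the resulting cone-membership condition is precisely the negation of the stationarity system. The continuity passage in case (ii) is conceptually the delicate part, since it is where the constraint qualification is genuinely used: without the interior direction $\hat d$ the open cone could fail to be dense in the closed linearized cone, and the reduction to case (i) would break down.
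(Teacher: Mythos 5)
Your proof is correct. For assertion (i) you follow essentially the same path as the paper: stationarity is rewritten as infeasibility of the linear system consisting of $-\nabla f(x^*)^\top d>0$ together with the linearized constraints (equalities split into opposite pairs of inequalities), and Farkas' lemma delivers the multipliers, with complementary slackness coming for free from setting $\lambda_i=0$ off the active set. For assertion (ii), however, you take a genuinely different route. The paper applies Motzkin's transposition theorem directly to the strict system $\nabla f(x^*)^\top d<0$, $\nabla g_i(x^*)^\top d<0$ ($i$ active), $\nabla h_j(x^*)^\top d=0$, obtains the Fritz--John conditions, and then invokes the constraint qualification only to guarantee a positive multiplier on $\nabla f(x^*)$. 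You instead use the constraint qualification up front, in MFCQ/Slater form, to produce an interior direction $\hat d$ of the linearized cone, show by the perturbation $d_0+\varepsilon\hat d$ that the open cone is dense in the closed linearized cone, extend $\nabla f(x^*)^\top d\ge 0$ to the closure by continuity, and then fall back on the Farkas argument of case (i). Your version has the advantage of needing only one theorem of the alternative and making transparent where the qualification is genuinely used (ruling out an empty or non-dense open cone); its cost is that it commits to a specific form of the qualification, whereas the paper's Fritz--John route accepts any qualification that forces $\lambda_0>0$. Both arguments are sound.
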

\begin{proof}
Assertion (i). Let $x^*$ be a stationary point. Then, there does not exist a direction $d\in D(x^*)$ such that
$$
\nabla f(x^*)^{\top}d<0.
$$
This implies that the system
$$
\begin{array}{ccc}
\nabla f(x^*)^{\top}d&<0&\\
\nabla g_i(x^*)^{\top}d & \le 0 & \ \ i: g_i(x^*) = 0\\
\nabla h_i(x^*)^{\top}d&\le 0 & i=1,\ldots ,p\\
-\nabla h_i(x^*)^{\top}d&\le 0 & i=1,\ldots ,p\\
\end{array}
$$
does not admit solution. By Farkas' Lemma we get the thesis.
\par\medskip\noindent
Assertion (ii). Let $x^*$ be a stationary point. Then, there does not exist a direction $d\in D(x^*)$ such that
$$
\nabla f(x^*)^{\top}d<0.
$$
This implies that the system
$$
\begin{array}{ccc}
\nabla f(x^*)^{\top}d&<0&\\
\nabla g_i(x^*)^{\top}d & < 0 & \ \ i: g_i(x^*) = 0\\
\nabla h_i(x^*)^{\top}d&= 0 & i=1,\ldots ,p\\
\end{array}
$$
does not admit solution. By Motzkin’s theorem  we get that $x^*$ satisfies the Fritz-John conditions and hence, by assuming a constraint qualification, the thesis is proved.
\end{proof}
\par\medskip\noindent
Condition (i) of \cref{prop2:appendix} holds if the functions $g_i$, $i=1,\ldots ,m$, $h_j$, $j=1,\ldots ,p$ are affine.
\par\medskip\noindent
Condition (ii) of \cref{prop2:appendix} 
holds by assuming that the convex functions $g_i$, for $i=1,\ldots ,m$ are such that
\begin{equation}\label{d1}
g_i(x+td)\ge g_i(x)+t\nabla g_i(x)^{\top}d+\frac{1}{2}\gamma t^2\|d\|^2
\end{equation}
with $\gamma >0$. Indeed, in this case it is easy to see that a direction $d$ is a feasible direction at $x^*$ if and only if
$$
\nabla g_i(x^*)^{\top}d<0\quad \ \ i: g_i(x^*) = 0
\quad\quad
\nabla h_j(x^*)^{\top}d=0\quad i=1,\ldots ,p
$$
\par\medskip\noindent
Condition \eqref{d1} is satisfied by assuming that the functions $g_i$ are twice continuosly differentiable and the Hessian matrix is positive definite.
\par\medskip\noindent
Condition \eqref{d1} holds also for continuously
differentiable functions $g_i$ assuming that they are {\it strongly convex with constant $\mu_i>0$}, i.e., that for $i=1,\ldots ,m$ the functions
$$
g_i(y)\ge
g_i(x)+\nabla g_i(x)^\top(y-x)+\frac{\mu_i}{2}\|y-x\|^2, \quad \forall\ x,y.
$$

\bibliographystyle{plain}
\bibliography{bibliography}

\begin{thebibliography}{10}

\bibitem{Anagnostopoulos2010}
K.P. Anagnostopoulos and G.~Mamanis.
\newblock A portfolio optimization model with three objectives and discrete
  variables.
\newblock {\em Computers \& Operations Research}, 37(7):1285--1297, 2010.

\bibitem{bach2012optimization}
Francis Bach, Rodolphe Jenatton, Julien Mairal, and Guillaume Obozinski.
\newblock Optimization with sparsity-inducing penalties.
\newblock {\em Foundations and Trends{\textregistered} in Machine Learning},
  4(1):1--106, 2012.

\bibitem{Beck2013}
A.~Beck and Y.~Eldar.
\newblock Sparsity constrained nonlinear optimization: Optimality conditions
  and algorithms.
\newblock {\em SIAM Journal on Optimization}, 23(3):1480--1509, 2013.

\bibitem{Beck2016}
Amir Beck and Nadav Hallak.
\newblock On the minimization over sparse symmetric sets: projections,
  optimality conditions, and algorithms.
\newblock {\em Mathematics of Operations Research}, 41(1):196--223, 2016.

\bibitem{Berge1963}
C.~Berge.
\newblock {\em Topological Spaces: Including a Treatment of Multi-valued
  Functions, Vector Spaces and Convexity}.
\newblock Macmillan, 1963.

\bibitem{Bertsimas2009}
Dimitris Bertsimas and Romy Shioda.
\newblock Algorithm for cardinality-constrained quadratic optimization.
\newblock {\em Computational Optimization and Applications}, 43(1):1--22, 2009.

\bibitem{Bienstock1996}
Daniel Bienstock.
\newblock Computational study of a family of mixed-integer quadratic
  programming problems.
\newblock {\em Mathematical Programming}, 74(2):121--140, 1996.

\bibitem{Boudt2019}
Kris Boudt and Chunlin Wan.
\newblock The effect of velocity sparsity on the performance of cardinality
  constrained particle swarm optimization.
\newblock {\em Optimization Letters}, 2019.

\bibitem{Branda2018}
Martin Branda, Max Bucher, Michal {\v{C}}ervinka, and Alexandra Schwartz.
\newblock Convergence of a scholtes-type regularization method for
  cardinality-constrained optimization problems with an application in sparse
  robust portfolio optimization.
\newblock {\em Computational Optimization and Applications}, 70(2):503--530,
  2018.

\bibitem{bucher2018second}
Max Bucher and Alexandra Schwartz.
\newblock Second-order optimality conditions and improved convergence results
  for regularization methods for cardinality-constrained optimization problems.
\newblock {\em Journal of Optimization Theory and Applications},
  178(2):383--410, 2018.

\bibitem{Burdakov2016}
O.~Burdakov, C.~Kanzow, and A.~Schwartz.
\newblock Mathematical programs with cardinality constraints: Reformulation by
  complementarity-type conditions and a regularization method.
\newblock {\em SIAM Journal on Optimization}, 26(1):397--425, 2016.

\bibitem{Candes2008}
E.J. Cand{\`e}s and M.B. Wakin.
\newblock An introduction to compressive sampling.
\newblock {\em IEEE Signal Processing Magazine}, 25(2):21--30, 2008.

\bibitem{vcervinka2016constraint}
Michal {\v{C}}ervinka, Christian Kanzow, and Alexandra Schwartz.
\newblock Constraint qualifications and optimality conditions for optimization
  problems with cardinality constraints.
\newblock {\em Mathematical Programming}, 160(1):353--377, 2016.

\bibitem{Chang2000}
T.-J. Chang, N.~Meade, J.E. Beasley, and Y.M. Sharaiha.
\newblock Heuristics for cardinality constrained portfolio optimisation.
\newblock {\em Computers \& Operations Research}, 27(13):1271--1302, 2000.

\bibitem{Deng2012}
Guang-Feng Deng, Woo-Tsong Lin, and Chih-Chung Lo.
\newblock Markowitz-based portfolio selection with cardinality constraints
  using improved particle swarm optimization.
\newblock {\em Expert Systems with Applications}, 39(4):4558--4566, 2012.

\bibitem{Dolan2002}
Elizabeth~D. Dolan and Jorge~J. Mor{\'e}.
\newblock Benchmarking optimization software with performance profiles.
\newblock {\em Mathematical Programming}, 91(2):201--213, 2002.

\bibitem{Dua2019}
Dheeru Dua and Casey Graff.
\newblock {UCI} machine learning repository, 2017.

\bibitem{Fernandez2007}
Alberto Fernández and Sergio Gómez.
\newblock Portfolio selection using neural networks.
\newblock {\em Computers \& Operations Research}, 34(4):1177--1191, 2007.

\bibitem{hastie2009elements}
Trevor Hastie, Robert Tibshirani, and Jerome Friedman.
\newblock {\em The elements of statistical learning: data mining, inference,
  and prediction}.
\newblock Springer Science \& Business Media, 2009.

\bibitem{lapucci2021convergent}
Matteo Lapucci, Tommaso Levato, and Marco Sciandrone.
\newblock Convergent inexact penalty decomposition methods for
  cardinality-constrained problems.
\newblock {\em Journal of Optimization Theory and Applications},
  188(2):473--496, 2021.

\bibitem{li2006nonlinear}
Duan Li and Xiaoling Sun.
\newblock {\em Nonlinear integer programming}, volume~84.
\newblock Springer Science \& Business Media, 2006.

\bibitem{liu1989limited}
Dong~C Liu and Jorge Nocedal.
\newblock On the limited memory bfgs method for large scale optimization.
\newblock {\em Mathematical programming}, 45(1):503--528, 1989.

\bibitem{Lu2013}
Z.~Lu and Y.~Zhang.
\newblock Sparse approximation via penalty decomposition methods.
\newblock {\em SIAM Journal on Optimization}, 23(4):2448--2478, 2013.

\bibitem{Lucidi2005}
S.~Lucidi, V.~Piccialli, and M.~Sciandrone.
\newblock An algorithm model for mixed variable programming.
\newblock {\em SIAM Journal on Optimization}, 15(4):1057--1084, 2005.

\bibitem{Miller1984}
A.~Miller.
\newblock {\em Subset Selection in Regression}.
\newblock Chapman \& Hall/CRC Monographs on Statistics \& Applied Probability.
  CRC Press, 2002.

\bibitem{Mutunge2018}
Purity Mutunge and Dag Haugland.
\newblock Minimizing the tracking error of cardinality constrained portfolios.
\newblock {\em Computers \& Operations Research}, 90:33--41, 2018.

\bibitem{Natarajan1995}
Balas~Kausik Natarajan.
\newblock Sparse approximate solutions to linear systems.
\newblock {\em SIAM journal on computing}, 24(2):227--234, 1995.

\bibitem{Shaw2008}
Dong~X. Shaw, Shucheng Liu, and Leonid Kopman.
\newblock Lagrangian relaxation procedure for cardinality-constrained portfolio
  optimization.
\newblock {\em Optimization Methods and Software}, 23(3):411--420, 2008.

\bibitem{Vielma2008}
Juan~Pablo Vielma, Shabbir Ahmed, and George~L. Nemhauser.
\newblock A lifted linear programming branch-and-bound algorithm for
  mixed-integer conic quadratic programs.
\newblock {\em INFORMS Journal on Computing}, 20(3):438--450, 2008.

\bibitem{weston2003use}
Jason Weston, Andr{\'e} Elisseeff, Bernhard Sch{\"o}lkopf, and Mike Tipping.
\newblock Use of the zero norm with linear models and kernel methods.
\newblock {\em The Journal of Machine Learning Research}, 3:1439--1461, 2003.

\end{thebibliography}
\end{document}